\theoremstyle{definition}
\theoremstyle{plain}
\newtheorem{dn}{\bf Definition}
\newcommand{\C}{\mathcal{C}}
\newcommand{\A}{\mathcal{A}}
\newcommand{\tx}{\otimes }
\newcommand{\ts}{\oplus}
\newcommand{\ri}{\rightarrow }
\newcommand{\Lh}{\frak{L}}
\newcommand{\Rh}{\frak{R}}
\newcommand{\Fo}{\hat{F}}
\newcommand{\Lo}{\hat{L}}
\newcommand{\Ro}{\hat{R}}
\newcommand{\Go}{\hat{G}}
\newcommand{\La}{\Breve{L}}
\newcommand{\Ra}{\Breve{R}}
\newtheorem{thm}{\bf Theorem}[section]
\newtheorem{pro}[thm]{\bf Proposition}
\newtheorem{hq}[thm]{\bf Corrolary}
\theoremstyle{definition}
\begin{document}

\title{ \Large{\bf On the braiding of an Ann-category}}

\pagestyle{myheadings} 
\markboth{Nguyen Tien Quang-Dang Dinh Hanh}{On the braiding of an Ann-category}
\maketitle
\noindent {\MakeUppercase{Nguyen Tien Quang} and \MakeUppercase{Dang Dinh Hanh}}

\vspace{0.2cm}

\noindent{\small {\it Dept. of Mathematics, Hanoi National University of Education, Viet Nam\\
Email: nguyenquang272002@gmail.com\\
\indent \quad\ \   ddhanhdhsphn@gmail.com}}

\setcounter{tocdepth}{1}

\begin{abstract}   
A braided Ann-category $\A$ is an Ann-category $\A$ together with the braiding $c$ such that $(\A, \otimes, a, c, (I,l,r))$ is a braided tensor category, and $c$ is compatible with the distributivity constraints. The paper shows the dependence of the left (or right) distributivity constraint on other axioms. Hence, the paper shows the relation to the concepts of {\it distributivity category} due to M. L. Laplaza and {\it ring-like category} due to A. Frohlich and C.T.C Wall. 

The center construction of an almost strict Ann-category is an example of an unsymmetric braided Ann-category.
\end{abstract} 

\vspace{0.3cm}
 
\noindent {\small{\bf Mathematics Subject Classifications (2000):} 18D10.}

\vspace{0.3cm}
 
\noindent {\small{\bf Key words:} Braided Ann-category, braided tensor category, distributivity constraint, ring-like category.}

\vspace{0.2cm}

\section{Introduction}
The concept of a braided tensor category is introduced by Andr\'{e} Joyal and Ross Street [2] which is a necessary extension of a symmetric tensor category, since the center of a tensor category is  a braided tensor category but unsymmetric. The case of braided group categories was considered in the above work like a structure lift of the concept of group category [1].

In 1972, M.L. Laplaza introduced the concept of a distributivity category  [4]. After that, in [1], A. Frohlich and C.T.C Wall introduced the concept of ring-like category, with the axiomatics which is asserted to be simplier than the one of M.L. Laplaza.   These two concepts  are categorization of the concept of commutative rings, as well as a generalization of the category of modules on a commutative ring $R$. In order to have descriptions on structures, and cohomological classify them, N. T. Quang has introduced  the concept of Ann-categories [5], as a categorization of the concept of rings (unnecessarily commutative),  with requirements of invertibility of objects and morphisms of the ground category, similar to of group categories (see [1]). With these requiments, we can prove that each congruence class of Ann-categories is completely defined by the invariants: the ring $R$, the $R-$bimodule $M$ and an element in the  Mac Lane cohomology group  $H^{3}_{MacL}(R, M)$ (see [8]).

The concept of a braided Ann-category is a natural development of the concept of an Ann-category. The axioms of this concept presents natural reletions between the constraints respect to   $\oplus, \otimes.$ After that, we have proved the dependence of some axioms between the braiding and the distributivity constraints: thanks to  the braiding $c$, the distributivity constraint can be defined by only one side (right or left). Concurrently, the paper shows that  each symmetric Ann-category (which the braiding is symmetric) satisfies the axiomatics due to M. L.  Laplaza [4] and the axiomatics due to A. Frohlich and C.T.C Wall [1]. Moreover, as a corrolary, the paper shows a reduced system  axiomatics of the one due to M. L. Laplaza, and prove that this axiomatics is equivalent to the one due to A. Frohlich and C. T. C Wall.

In the last section, we present the {\it center construction} of an almost strict Ann-category, as an extension of the center construction of a tensor category due to AndrÐ Joyal and Ross Street. This center construction is an example of the concept of a braided, but unsymmetric Ann-category.

In this paper, we sometimes denote  $XY$ instead of  $X\tx Y$ of two objects  $X, Y$.

\section {The definition of a braided  Ann-category}

Firstly, let us recall the definition of a {\it braided tensor category} according to [2].

{\it A braiding for a tensor category $\mathcal V$ consists of a natural collection of isomorphisms \[c=c_{A,B}: A\tx B \stackrel{\sim}{\longrightarrow} B\tx A\]
in $\mathcal V$ such that the two diagrams} (B1) {\it and} (B2) {\it commute:}

\[\scriptsize\begin{diagram}
\node{(A\otimes B)\otimes C}\arrow{e,t}{c\tx id}\arrow{s,l}{a^{-1}}
\node{(B\otimes A)\otimes C}\arrow{e,t}{a^{-1}}
\node{B\otimes (A\otimes C)}\arrow{s,r}{id\tx c}\\
\node{A\otimes (B\otimes C)}\arrow{e,t}{c}
\node{(B\otimes C)\otimes A}\arrow{e,t}{a^{-1}}
\node{B\otimes (C\otimes A)}
\tag{B1}
\end{diagram}\]
\[\scriptsize\begin{diagram}
\node{A\otimes (B\otimes C)}\arrow{e,t}{id\tx c}\arrow{s,l}{a}
\node{A\otimes (C\otimes B)}\arrow{e,t}{a}
\node{(A\otimes C)\otimes B}\arrow{s,r}{c\tx id}\\
\node{(A\otimes B)\otimes C}\arrow{e,t}{c}
\node{C\otimes (A\otimes B)}\arrow{e,t}{a}
\node{(C\otimes A)\otimes B}
\tag{B2}
\end{diagram}\]

If $c$ is a braiding,  so is $c'$ given by  $c'_{A,B}=(c_{B,A})^{-1}$,  since (B2) is just obtained from (B1) by replacing $c$ with $c'$. A {\it symmetry} is a braiding $c$ which satisfies  $c'=c$. 

{\it A braided tensor category is a pair $(\mathcal V, c)$ consisting of a tensor category $\mathcal V$ and a braiding $c$.}

\begin{dn} $ [5, 7, 8]$
An Ann-category consists of:

i) A category $\A$ together with two bifunctors $\ts,\tx:\A\times\A\longrightarrow \A.$

ii) A fixed object $0\in \A$ together with naturality constraints $a^+,c^+,g,d$ such that $(\A,\ts,a^+,c^+,(0,g,d))$ is a symmetric categorical group.

iii) A fixed object $1\in\A$ together with naturality constraints $a,l,r$ such that $(\A,\tx,a,(1,l,r))$ is a monoidal $A$-category.

iv) Natural isomorphisms $\Lh,\Rh$
\begin{equation*}
\begin{array}{cccc}
\Lh_{A, X, Y} : &A\otimes (X\oplus Y) &\rightarrow& (A\otimes X)\oplus (A\otimes Y)\\
\Rh_{X, Y, A}:&(X\oplus Y)\otimes A &\rightarrow &(X\otimes A)\oplus (Y\otimes A)
\end{array}
\end{equation*}
such that the following conditions are satisfied:

(Ann-1) For each $A\in \A,$ the pairs $(L^A,\breve{L^A}),(R^A,\breve{R^A})$ defined by relations:
\begin{equation*}
\begin{array}{cccc}
L^A = A \otimes -&&R^A = - \otimes A\\
\La_{X, Y}^A = \Lh_{A, X, Y}&&\Ra_{X, Y}^A = \Rh_{X, Y, A}
\end{array}
\end{equation*}
 are $\ts$-functors which are compatible with $a^+$ and $c^+$.

(Ann-2) For all $ A,B,X,Y\in \A,$ the following diagrams:
\[\scriptsize\begin{diagram}
\node{(AB)(X\oplus Y)}\arrow{s,l}{\La^{AB}}
\node{A(B(X\oplus Y))}\arrow{e,t}{id_A \otimes \La^B}\arrow{w,t}{\ a_{A, B, X\oplus Y}}
\node{A(BX\oplus BY)}\arrow{s,r}{\La^A}\\
\node{(AB)X\oplus (AB)Y}
\node[2]{A(BX)\oplus A(BY)}\arrow[2]{w,t}{a_{A, B, X}\oplus a_{A, B, Y}}
\tag{1}\label{bd1}
\end{diagram}\]
\[\scriptsize\begin{diagram}
\node{(X\oplus Y)(BA)}\arrow{s,l}{\Ra^{BA}}\arrow{e,t}{a_{X\oplus Y, B, A}}
\node{((X\oplus Y)B)A}
\node{(XB\oplus YB)A}\arrow{w,t}{\Ra^B \otimes id_A}\arrow{s,r}{\Ra^A}\\
\node{X(BA)\oplus Y(BA)}\arrow[2]{e,t}{a_{X, B, A}\oplus a_{Y, B, A}}
\node[2]{(XB)A\oplus (YB)A}
\tag{2}\label{bd2}
\end{diagram}\]
\[\scriptsize\begin{diagram}
\node{(A(X\oplus Y))B}\arrow{s,l}{\La^A \otimes id_B}
\node{A((X\oplus Y)B)}\arrow{w,t}{a_{A, X\oplus Y, B}}\arrow{e,t}{id_A \otimes \Ra^B}
\node{A(XB\oplus YB)}\arrow{s,r}{\La^A}\\
\node{(AX\oplus AY)B}\arrow{e,t}{\Ra^B}
\node{(AX)B\oplus (AY)B}
\node{A(XB)\oplus A(YB)}\arrow{w,t}{a\  \oplus \ a}
\tag{3}\label{bd3}
\end{diagram}\]

\[\scriptsize
\begin{diagram}
\node{(A\oplus B)X\oplus (A\oplus B)Y}\arrow{s,l}{\Ra^X \oplus \Ra^Y}
\node{(A\oplus B)(X\oplus Y)}\arrow{w,t}{\La}\arrow{e,t}{\Ra}
\node{A(X\oplus Y)\oplus B(X\oplus Y)}\arrow{s,r}{\La^A \oplus \La^B}\\
\node{(AX\oplus BX)\oplus (AY\oplus BY)}\arrow[2]{e,t}{\text{v}}
\node[2]{(AX\oplus AY)\oplus (BX\oplus BY)}
\tag{4}\label{bd4}
\end{diagram}\]
commute, where $\text{v} = \text{v}_{_{U, V, Z, T}}: (U\oplus V)\oplus (Z\oplus T) \rightarrow (U\oplus Z)\oplus (V\oplus T)$ \ 
is the unique morphism built from $a^+,c^+,id$ in the symmetric categorical group $(\A,\ts).$

(Ann-3) For the unity object $1\in \A$ of the operation $\ts,$ the following diagrams:
\[\scriptsize\begin{diagram}
\node{1(X\oplus Y)}\arrow[2]{e,t}{\La^1}\arrow{se,b}{l_{X\oplus Y}}
\node[2]{1X \oplus 1Y}\arrow{sw,b}{l_X\oplus l_Y}
\node{(X\oplus Y)1}\arrow[2]{e,t}{\Ra^1}\arrow{se,b}{r_{_{X\oplus Y}}}
\node[2]{X1\oplus Y1}\arrow{sw,b}{r_{_X}\oplus r_{_Y}}\\
\node[2]{X\oplus Y}
\node{(5.1)}
\node[2]{X\oplus Y}
\node{(5.2)}
\end{diagram}\]
commute.
\end{dn}

 \begin{dn}  
A braided Ann-category  $\A$ is an Ann-category $\A$ together with a  braiding $c$ such that $(\A, \otimes, a, c, (I,l,r))$ is a braided tensor category, and $c$ makes the following diagram
 \[\scriptsize\begin{diagram}\label{bd5}
\node{A.(X\ts Y)}\arrow{e,t}{\La^A_{X,Y} }\arrow{s,l}{c}
\node{A.X\ts A.Y}\arrow{s,r}{c\ts c}\\
\node{(X\oplus Y).A}\arrow{e,t}{\Ra^A_{X,Y}}
\node{XA\ts YA}
\tag{6}
\end{diagram}\]
commutes and satisfies the condition:\   $c_{_{0, 0}}=id$.

A braided Ann-category  is called a symmetric Ann-category if the braiding $c$ is a symmetric. 
\end{dn}

\section{Some remarks on the axiomatics of a braided Ann-category}
 
In the axiomatics of a braided Ann-category, we can see that the diagram (6) allows us to   determine the right distributivity constraint $\Rh$ thanks to the left distributivity constraint and  the braiding $c$. So, we must consider the dependence or independence of the axioms which are  related to the right distributivity constraint $\Rh$, as well as the dependence of some other conditions in our axiomatics. 

Fristly, let us  recall a result which has known.

\begin{pro}[{Proposition 2[7]}] \label{md1}
In the axiomatics of an Ann-category $\A,$ the compatibility of the functors $(L^A, \breve{L}^A)$, $ (R^A, \breve{R}^A)$ with the commutativity constraint $c^+$ can be deduced from the other axioms.
\end{pro}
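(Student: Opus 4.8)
\emph{Proof proposal.} We must show that for every object $A$ the two squares expressing that $(L^A,\La^A)$ and $(R^A,\Ra^A)$ are \emph{symmetric} $\oplus$-functors of $(\A,\oplus)$ commute, i.e.
\[
c^+_{AX,AY}\circ\La^A_{X,Y}=\La^A_{Y,X}\circ(\mathrm{id}_A\tx c^+_{X,Y}),
\qquad
c^+_{XA,YA}\circ\Ra^A_{X,Y}=\Ra^A_{Y,X}\circ(c^+_{X,Y}\tx\mathrm{id}_A),
\]
assuming only that $(L^A,\La^A)$, $(R^A,\Ra^A)$ are $\oplus$-functors compatible with $a^+$, that (Ann-2) and (Ann-3) hold, and that $(\A,\oplus,a^+,c^+,(0,g,d))$ is a \emph{symmetric} categorical group. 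By the left--right symmetry of the axioms of an Ann-category (interchanging the two factors of $\tx$ swaps $\Lh\leftrightarrow\Rh$, $(1)\leftrightarrow(2)$, $(5.1)\leftrightarrow(5.2)$, and leaves $(3)$ self-dual) it suffices to treat $\Lh$. The plan is: isolate the ``symmetry defect'' of $(L^A,\La^A)$, show it is natural and additive, normalise it on the unit $1$ via (Ann-3), and force it to vanish everywhere using the coherence supplied by (1)--(4) of (Ann-2).

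First set $\gamma^A_{X,Y}:=\big(c^+_{AX,AY}\circ\La^A_{X,Y}\big)\circ\big(\La^A_{Y,X}\circ(\mathrm{id}_A\tx c^+_{X,Y})\big)^{-1}\in\mathrm{Aut}(AX\oplus AY)$; the goal is $\gamma^A_{X,Y}=\mathrm{id}$. Since $\Lh,c^+,a^+$ are natural in all variables, $\gamma^A_{X,Y}$ is natural in $A,X,Y$; identifying every automorphism group of $(\A,\oplus)$ with the abelian group $\Pi=\mathrm{Aut}(0)$, it is therefore given by a function $T$ on triples of isomorphism classes with values in $\Pi$. The hexagon coherence for $c^+$ together with the compatibility of $(L^A,\La^A)$ with $a^+$ shows that $T$ is additive in each argument, and the symmetry identity $c^+_{Y,X}\circ c^+_{X,Y}=\mathrm{id}$ shows that $T$ is alternating in its last two arguments. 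This part is routine; note, however, that it is too weak on its own to give $T\equiv 0$.

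The real input is the multiplicative structure. Diagram (1) of (Ann-2) says precisely that $(L^{AB},\La^{AB})$ is, up to the associativity isomorphism $a_{A,B,-}$, the composite $\oplus$-functor $(L^A,\La^A)\circ(L^B,\La^B)$; since the symmetry defect of a composite of $\oplus$-functors is the (transported) composite of the defects, and since $c^+$-compatibility is invariant under the canonical isomorphism of $\oplus$-functors given by $a_{A,B,-}$, diagram (1) yields a Leibniz-type relation of the shape $T(ab,x,y)=T(a,bx,by)+a\cdot T(b,x,y)$; diagram (2) yields its mirror for the $\Rh$-defect, diagram (3) (which expresses that $(L^A,\La^A)$ and $(R^B,\Ra^B)$ commute as $\oplus$-functors) yields a relation linking the two defects, and diagram (4) contributes the remaining compatibility of the two distributivities over a double sum. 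On the other side, (5.1) of (Ann-3) makes $(L^1,\La^1)$ have trivial defect: with $\La^1_{X,Y}=(l_X\oplus l_Y)^{-1}\circ l_{X\oplus Y}$, naturality of $l$ and of $c^+$ collapse $\gamma^1_{X,Y}$ to $c^+_{1X,1Y}\circ c^+_{1Y,1X}=\mathrm{id}$; likewise (5.2) kills the $\Rh$-defect on $1$, and both defects vanish on $0$ since $0\tx X$ is canonically $0$ via $\Rh$. Feeding these normalisations into the system extracted from (1)--(4) and solving it gives $T\equiv 0$, hence both squares commute.

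\emph{The main obstacle.} All the weight rests on the final step. Diagrams (1)--(2) alone are too soft — they admit nonzero solutions — and one cannot finish by a bare ``generation'' argument, because a general object of $\A$ is \emph{not} obtained from $0$ and $1$ by $\oplus$, $\oplus$-inverse and $\tx$. What is needed is to entangle the $\Lh$- and $\Rh$-defects through diagrams (3) and (4), i.e. to exploit that the two one-sided distributivities are \emph{simultaneously} coherent, in order to transport the normalisation at $1$ to an arbitrary object; keeping track of the canonical additive-coherence isomorphisms (the morphism $\text{v}$ and the relevant instances of $a^+,c^+$) that mediate this is the delicate point.
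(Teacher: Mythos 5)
First, note that the paper you are working from does not actually prove this statement: it is quoted as ``Proposition 2 [7]'' and the proof lives in the cited reference, so there is nothing here to compare against line by line. Judged on its own terms, your proposal has a genuine gap, and it is exactly the one you flag yourself. The conclusion $T\equiv 0$ is asserted (``feeding these normalisations into the system \dots and solving it'') but never derived, and the relations you \emph{do} establish provably cannot force it: additivity in each argument, antisymmetry in the last two, $T(1,x,y)=0$, and the Leibniz-type relation $T(ab,x,y)=T(a,bx,by)+a\,T(b,x,y)$ form a closed but underdetermined system --- substituting $a=1$ or $b=1$ into the Leibniz relation gives tautologies, and additivity in $a$ together with $T(1,\cdot,\cdot)=0$ only kills $T$ on the image of $\mathbb{Z}\to\Pi_0$. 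Nothing you have written connects the value of $T$ at a general object to its value at $1$ or $0$. Since among the remaining axioms the constraint $c^+$ occurs \emph{only} in the symmetric-categorical-group structure of $(\A,\oplus)$ and inside the interchange morphism $\mathrm{v}$ of diagram (4), any correct proof must extract its decisive identity from diagram (4) (together with (3) and the $\Rh$-defect); you announce this but never write down what those diagrams give, let alone solve the resulting system. A proof whose last step is the admitted ``delicate point'' is not a proof.

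Two smaller inaccuracies in the part you do carry out. Additivity of $T$ in its \emph{first} argument does not follow from ``the hexagon plus compatibility of $(L^A,\La^A)$ with $a^+$'' --- those only govern the last two arguments; additivity in $A$ is precisely a consequence of diagram (4) (comparing the defect of $\La^{A\oplus B}$ with that of $\La^A\oplus\La^B$ via the coherence of $\mathrm{v}$ under $c^+$), so even your ``routine'' step already consumes the axiom you later claim to hold in reserve. Second, the reduction of a natural automorphism of $AX\oplus AY$ to an element of $\Pi_1=\mathrm{Aut}(0)$ depending only on isomorphism classes silently uses that $(\A,\oplus)$ is a categorical group (all objects invertible, all morphisms invertible); that is legitimate for Ann-categories but should be said, since it is what makes the whole $(\Pi_0,\Pi_1)$-transfer meaningful. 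The cited proof avoids this transfer entirely and argues by a direct diagram chase; if you want to keep your cohomological framing, the missing content is the explicit relation that diagram (4) imposes between the $\Lh$-defect, the $\Rh$-defect and the $c^+$ hidden in $\mathrm{v}$, and a verification that this relation, not the Leibniz identity, is what annihilates $T$.
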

For the axiomatics of a braided Ann-category, we have the following result: 

\begin{pro}\label{md2}
In the axiomatics of a braided Ann-category $\A$, the compatibility of the functor $ (R^A, \breve{R}^A)$ [resp. $(L^A,\La^A)$] with the associativity constraint $a^+$ can be deduced from  the compatibility of the functor $(L^A, \La^A)$ [resp. $(R^A, \Ra^A)$] with the  constraint $a^+$ and the diagram (6). 
\end{pro}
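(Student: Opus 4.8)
The plan is to transport the compatibility of $(L^A,\La^A)$ with $a^+$ along the braiding-induced identification of $\Lh$ with $\Rh$ supplied by diagram (6). Concretely, recall that a $\ts$-functor $(R^A,\Ra^A)$ is compatible with $a^+$ precisely when the hexagon-type diagram relating $\Ra^A_{X\ts Y, Z}$, $\Ra^A_{X,Y\ts Z}$, $\Ra^A_{X,Y}$, $\Ra^A_{Y,Z}$ and the three instances of $a^+$ commutes; the same holds for $(L^A,\La^A)$ with $\La^A$ in place of $\Ra^A$. So I would start by writing down these two hexagons explicitly, the one for $L^A$ (assumed to commute) and the one for $R^A$ (to be proved).

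First I would, for each argument pair, replace every occurrence of $\Ra^A_{X,Y}$ in the $R^A$-hexagon by the composite $(c_{A,X}\ts c_{A,Y})\circ \La^A_{X,Y}\circ c_{X\ts Y,A}^{-1}$, which is exactly what diagram (6) asserts (diagram (6) reads $\Ra^A\circ c = (c\ts c)\circ \La^A$, i.e. $\Ra^A = (c\ts c)\circ\La^A\circ c^{-1}$). After this substitution the $R^A$-hexagon becomes a diagram built from $\La^A$'s, from the braiding $c$ with one leg always the fixed object $A$, and from the associativity and commutativity constraints $a,a^+,c^+$. The next step is to peel off the braiding morphisms: I would use the naturality of $c$ (to slide $c_{-,A}$ past the $\La^A$'s and $a^+$'s) together with the braided-tensor-category axioms (B1) and (B2), which govern how $c_{X\ts Y,A}$ decomposes through $c_{X,A}$, $c_{Y,A}$ and $a$. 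The point is that the three "outer" braiding factors coming from the three edges of the hexagon, plus the compatibility of $(L^A,\La^A)$ with $a^+$ in the middle, should collapse the whole diagram to a commuting one — intuitively, conjugation by an invertible natural transformation carries a commuting hexagon to a commuting hexagon, provided that transformation is itself coherent, which is exactly the content of (B1)–(B2).

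The main obstacle I expect is bookkeeping: after substituting via (6), the braiding factors on the three edges do not cancel in pairs "on the nose" — their composites must be rearranged using (B1), (B2) and the functoriality of $\ts$ on morphisms (so that $c\ts c$ followed by $c\ts c$ on nested sums matches the $\text{v}$-type reshuffling), and one must check that no stray instance of $c^+$ or the unit constraints is introduced. A clean way to organise this is to observe that diagram (6), being natural in $X,Y$ and compatible (via (B1)–(B2) and Proposition \ref{md1}) with all the $\ts$-structure, exhibits $(R^A,\Ra^A)$ as the image of $(L^A,\La^A)$ under the automorphism of $(\A,\ts)$ "conjugate by $c_{-,A}$"; since $\ts$-functor axioms are preserved by such conjugation, the compatibility of $(R^A,\Ra^A)$ with $a^+$ follows from that of $(L^A,\La^A)$. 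The bracketed statement is obtained by the symmetric argument, using the inverse braiding $c'_{A,X}=c_{X,A}^{-1}$ (which is again a braiding, as noted after (B2)) and reading diagram (6) in the other direction.
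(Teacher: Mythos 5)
Your overall strategy is the paper's: diagram (6) says precisely that $c_{A,-}$ is a $\ts$-morphism from $(L^A,\La^A)$ to $(R^A,\Ra^A)$, and conjugating the commuting $a^+$-compatibility hexagon for $L^A$ by this invertible $\ts$-morphism yields the hexagon for $R^A$; the paper realizes this as one large diagram whose perimeter is the $L^A$-hexagon, whose central region is the $R^A$-hexagon, and whose connecting cells commute by diagram (6), the naturality of $c$, and the naturality of $a^+$. The one substantive flaw in your write-up is the repeated appeal to (B1) and (B2): those axioms govern how $c$ interacts with the \emph{tensor} product $\otimes$ and its associativity constraint $a$, whereas every object in the hexagon being transferred is a \emph{sum} $X\ts Y\ts Z$, so the decomposition of $c_{A,X\ts Y}$ through $c_{A,X}\ts c_{A,Y}$ that you need is not supplied by (B1)--(B2) at all --- it is exactly the content of diagram (6) itself (conjugated by $\La$ and $\Ra$). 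Likewise Proposition \ref{md1} ($c^+$-compatibility) plays no role here. If you drop those appeals, your final formulation (``$\ts$-functor axioms are preserved under conjugation by an $\ts$-natural isomorphism'') is correct and is what the paper's explicit diagram verifies; in particular the braiding factors on the edges need no (B1)--(B2) rearrangement, since each connecting square commutes on the nose by (6) or by naturality.
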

\begin{proof}
To prove the compatibility of functor $(R^A, \Ra^A)$ with the associativity constraint $a^{+}$, let us  consider the following diagram:
\[\scriptsize\setlength\unitlength{0.5cm}
\begin{picture}(20,11)
\put(0, 10){$A((X\ts Y)\ts Z)$}
\put(0, 7){$((X\ts Y)\ts Z)A$}
\put(0, 4){$(X\ts (Y\ts Z))A$}
\put(0, 1){$A(X\ts (Y\ts Z))$}

\put(7.5, 10){$A(X\ts Y)\ts AZ$}
\put(7.5, 7){$(X\ts Y)A\ts ZA$}
\put(7.5, 4){$XA\ts (Y\ts Z)A$}
\put(7.5, 1){$AX\ts A(Y\ts Z)$}

\put(15, 10){$(AX\ts AY)\ts AZ$}
\put(15, 7){$(XA\ts YA)\ts ZA$}
\put(15, 4){$XA\ts (YA\ts ZA)$}
\put(15, 1){$AX\ts (AY\ts AZ)$}

\put(2.5, 9.7){\vector(0, -1){2.1}}\put(2.1, 8.5){c}
\put(2.5, 4.7){\vector(0, 1){2.1}}\put(2.7, 5.5){$a^+\tx id_a$}
\put(2.5, 1.5){\vector(0, 1){2.1}}\put(2.1, 2.5){c}

\put(9.5, 9.7){\vector(0, -1){2.1}}\put(7.9, 8.5){$c\ts c$}
\put(9.5, 1.5){\vector(0, 1){2.1}}\put(7.9, 2.5){$c\ts c$}

\put(16.5, 9.7){\vector(0, -1){2.1}}\put(16.7, 8.5){$(c\ts c)\ts c$}
\put(16.5, 4.6){\vector(0, 1){2.1}}\put(16.7, 5.5){$a^+$}
\put(16.5, 1.5){\vector(0, 1){2.1}}\put(16.7, 2.5){$c\ts (c\ts c)$}

\put(4.5, 10.1){\vector(1,0){2.5}}\put(5.5, 10.5){$\La$}
\put(11.8, 10.1){\vector(1,0){2.9}}\put(12.3, 10.5){$\La\ts id$}

\put(4.5, 7.1){\vector(1,0){2.5}}\put(5.5, 7.3){$\Ra$}
\put(11.8, 7.1){\vector(1,0){2.9}}\put(12.3, 7.3){$\Ra\ts id$}

\put(4.5, 4.1){\vector(1,0){2.5}}\put(5.5, 4.3){$\Ra$}
\put(11.8, 4.1){\vector(1,0){2.9}}\put(12.3, 4.3){$id\ts\Ra$}

\put(4.5, 1.1){\vector(1,0){2.5}}\put(5.5, 1.3){$\La$}
\put(11.8, 1.1){\vector(1,0){2.9}}\put(12.3, 1.3){$id\ts \La$}

\put(-2.2,10.1){\vector(1,0){1.9}}
\put(-2.2,10.1){\line(0,-1){9}}
\put(-0.2,1.1){\line(-1,0){2}}\put(-2,5.5){$id\tx a^+$}

\put(21,10.1){\vector(-1,0){1.3}}
\put(21,10.1){\line(0,-1){9}}
\put(19.5,1.1){\line(1,0){1.5}}\put(21.2, 5.5){$a^+$}

\put(1, 5,5){(I)}
\put(5.4, 8.5){(II)}
\put(12.4, 8.5){(III)}
\put(9, 5.5){(IV)}
\put(5.4, 2.5){(V)}
\put(12.4, 2.5){(VI)}
\put(18, 5.5){(VII)}
\end{picture}\]

In that above diagram, the region (I) commutes thanks to the naturality of $c$, the regions (II), (VIII), (IX) commute thanks to the diagram (6); the first component of the region (III) commutes thanks to the diagram (6),   the second component  one commutes thanks to the composition of morphisms, so the region (III) commutes; the first component of the region (VI) commutes thanks to the composition of morphisms, the second component  one commutes thanks to the diagram (6), so the region (VI) commutes; the region (VII) commutes thanks to the naturality of the isomorphism $a^+$, the perimeter commutes since $(L^A,\La^A)$ is compatible with associativity constraint $a^+$. Therefore, the region (IV) commutes, i.e., $(R^A, \Ra^A)$ is compatible with $a^+$.
\end{proof}

\begin{pro}
In a braided Ann-category, the commutativity of the diagrams (2) and (3) can be deduced from the other axioms.
\end{pro}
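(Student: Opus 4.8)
The plan is to imitate, in a somewhat more elaborate form, the proof of Proposition~\ref{md2}: each of the diagrams (2) and (3) will be displayed as the boundary of a large diagram whose interior is tiled by regions, every one of which commutes for a reason already at our disposal --- the naturality of the braiding $c$, the hexagon axioms (B1) and (B2), the diagram (6), the bifunctoriality of $\ts$ and $\tx$, the coherence of the braided tensor category $(\A,\tx,a,c)$, and, decisively, the diagram (1), which is one of the defining axioms of the underlying Ann-category and hence available to us unconditionally.

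\emph{Diagram (2).} The point is that (2) is precisely the right-distributivity analogue of (1), obtained from it by moving the ``scalar'' from the left of $\tx$ to the right. In the braided tensor category there is a natural isomorphism $(X\ts Y)(BA)\cong (AB)(X\ts Y)$ assembled from $c$ and $a$, and under braiding isomorphisms of this kind the corners of (2) correspond, one by one, to the corners of (1). First I would draw the prism having (2) as one face and (1) as the opposite face. Each connecting quadrilateral then commutes for a uniform reason: those whose two non-braiding edges are distributivity isomorphisms commute by (6), which is exactly the square relating $\La^A$ to $\Ra^A$; those whose two non-braiding edges are associativity isomorphisms commute by the hexagons (B1)/(B2) together with the coherence of the braided tensor category; and the remaining quadrilaterals, whose edges are identities tensored or summed with isomorphisms, commute by the naturality of $c$ and the bifunctoriality of $\ts$ and $\tx$. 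Since (1) commutes and all the side faces commute, so does (2).

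\emph{Diagram (3).} Here $\La$ and $\Ra$ genuinely both occur, so first I would use (6) to rewrite the two occurrences of the right-distributivity constraint in (3) --- the one inside $id_A\tx\Ra^B_{X,Y}$ and the one given by $\Ra^B_{AX,AY}$ on $(AX\ts AY)B$ --- as $\La^B$ conjugated by the braiding $c_{-,B}$. This recasts (3) as a diagram built only from $\La^A$, $\La^B$, associativity isomorphisms and braidings, whose boundary I would then match against the boundary of a diagram assembled from (1) --- applied after the isomorphism $A((X\ts Y)B)\to A(B(X\ts Y))$ supplied by $id_A\tx c$ --- together with the already-established diagram (2), the hexagons, and naturality squares for $c$. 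Every newly introduced region again commutes for one of the reasons listed above, so the commutativity of (3) follows from that of (1) and (2).

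The hard part will be purely organizational: choosing, at each vertex of these prisms, the right braiding isomorphism so that every side face falls into one of the admissible classes of commuting cell, and in particular arranging that each occurrence of the associativity constraint $a$ is absorbed into an instance of a hexagon rather than left as an uncancelled discrepancy. Once the tilings are correctly set up the verification is mechanical, and --- exactly as in Proposition~\ref{md2} --- no genuinely new coherence computation is needed.
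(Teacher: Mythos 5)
Your toolbox is the right one --- everything here does come down to tiling a large diagram with cells that commute by (1), (6), the hexagons and naturality --- but your dependency order is the reverse of the paper's, and that reversal is exactly where the risk lies. The paper proves (3) \emph{first}, directly: it transports $A((X\ts Y)B)$ through $A(B(X\ts Y))$, $(AB)(X\ts Y)$, $(BA)(X\ts Y)$, $B(A(X\ts Y))$ and $(A(X\ts Y))B$, using (1) twice (for $AB$ and for $BA$), (6), (B2) and naturality; no appeal to (2) is needed. It then proves (2) by a second tiling in which one of the interior cells is precisely an instance of the already-established diagram (3), namely the cell comparing $\La^A\tx id_B$ with $id_A\tx\Ra^B$ across $a_{A,X\ts Y,B}$. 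So the ``obvious'' prism over (2), the one that passes through objects of the shape $(A(X\ts Y))B$ and $(AX\ts AY)B$, does \emph{not} decompose into cells of the three admissible types you list: it contains (3) as a face. If your proof of (2) tacitly uses such a face while your proof of (3) uses (2), the argument is circular.

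The reversal can be rescued, but only by a choice you never make explicit: every occurrence of $\Ra$ in (2) must be eliminated by conjugating with the braiding (i.e.\ by (6) and naturality of $c$), never by commuting it past an associativity $a$ (which is exactly what (3) asserts). Concretely, writing all three of $\Ra^{BA}$, $\Ra^{B}\tx id_A$ and $\Ra^{A}$ as conjugates of $\La^{BA}$, $\La^{B}$, $\La^{A}$ via (6) reduces (2) to diagram (1) for the pair $(A,B)$ together with two identities of pure braided-category type --- the decompositions of $c_{B\tx A,\,X\ts Y}$ and of $c_{B\tx A,\,X}$ furnished by (B1)/(B2), in which the two crossings of $A$ with $B$ cancel. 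Those two identities are the actual mathematical content of the step, and your write-up replaces them with the remark that the remaining work is ``purely organizational.'' For a proposition whose entire substance is a diagram chase, exhibiting the tiling and checking each cell \emph{is} the proof; as it stands your proposal is a plausible plan whose one nonobvious point --- that (2) can be reached from (1) without passing through (3) --- is asserted rather than established.
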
 
\begin{proof}
To prove that the diagram (3) commutes, we consider the following diagram:
\[\scriptsize\setlength\unitlength{0.5cm}
\begin{picture}(21,17)
\put(2, 16){$A((X\ts Y)B)$}
\put(2, 13){$A(B(X\ts Y))$}
\put(2, 10){$(AB)(X\ts Y)$}
\put(2, 7){$(BA)(X\ts Y)$}
\put(2, 4){$B(A(X\ts Y))$}
\put(2, 1){$(A(X\ts Y))B$}

\put(9, 16){$A(XB\ts YB)$}
\put(9, 13){$A(BX\ts BY)$}
\put(9, 4){$B(AX\ts AY)$}
\put(9, 1){$(AX\ts AY)B$}

\put(16, 16){$A(XB)\ts A(YB)$}
\put(16, 13){$A(BX)\ts A(BY)$}
\put(16, 10){$(AB)X\ts (AB)Y$}
\put(16, 7){$(BA)X\ts (BA)Y$}
\put(16, 4){$B(AX)\ts B(AY)$}
\put(16, 1){$(AX)B \ts (AY)B$}

\put(3.7, 1.6){\vector(0,1){2}}\put(3, 2.5){$c$}
\put(3.7, 4.7){\vector(0,1){2}}\put(3, 5.5){$a$}
\put(3.7, 9.7){\vector(0,-1){2}}\put(1.6, 8.5){$c\tx id$}
\put(3.7, 12.7){\vector(0,-1){2}}\put(3, 11.5){$a$}
\put(3.7, 15.7){\vector(0,-1){2}}\put(1.6, 14.5){$id\tx c$}

\put(18, 1.6){\vector(0,1){2}}\put(18.3, 2.5){$c\ts c$}
\put(18, 4.7){\vector(0,1){2}}\put(18.3, 5.5){$a$}
\put(18, 9.7){\vector(0,-1){2}}\put(14.3, 8.5){$c\tx id\ts  c\tx id$}
\put(18, 12.7){\vector(0,-1){2}}\put(18.3, 11.5){$a$}
\put(18, 15.7){\vector(0,-1){2}}\put(15.8, 14.5){$id\tx c\ts\ \ id\tx c$}

\put(10.5, 1.6){\vector(0,1){2}}\put(10.7, 2.5){$c$}
\put(10.5, 15.7){\vector(0,-1){2}}\put(8, 14.5){$ id\tx (c \ \ts \ c)$}

\put(5.6,1.1){\vector(1,0){3}}\put(6.1,1.3){$\Lh\tx id$}
\put(12.5,1.1){\vector(1,0){3.1}}\put(13.8,1.3){$\Rh$}

\put(5.6,4.1){\vector(1,0){3}}\put(6.1,4.3){$id\tx \Lh$}
\put(12.5,4.1){\vector(1,0){3.1}}\put(13.8,4.3){$\Lh$}

\put(5.6,7.1){\vector(1,0){10.1}}\put(10.5,7.3){$ \Lh$}
\put(5.6,10.1){\vector(1,0){10.1}}\put(10.5,10.3){$ \Lh$}

\put(5.6,13.1){\vector(1,0){3}}\put(6.1,13.3){$id\tx \Lh$}
\put(12.5,13.1){\vector(1,0){3.1}}\put(13.8,13.3){$\Lh$}

\put(5.6,16.1){\vector(1,0){3}}\put(6.1,16.3){$id\tx \Rh$}
\put(12.5,16.1){\vector(1,0){3.1}}\put(13.8,16.3){$\Lh$}

\put(1.7,16.1){\line(-1,0){2}}
\put(-0.3,16,1){\line(0,-1){15}}\put(-1,8.5){$a$}
\put(-0.3,1.1){\vector(1,0){2}}

\put(20.4,16.1){\line(1,0){1.8}}
\put(22.2,16,1){\line(0,-1){15}}\put(20.6,8.5){$a\ts a$}
\put(22.2,1.1){\vector(-1,0){1.8}}

\put(6,14.5){(I)}
\put(13.2,14.5){(II)}
\put(10,11.5){(III)}
\put(10,8.5){(IV)}
\put(10,5.5){(V)}
\put(6,2.5){(VI)}
\put(13.2,2.5){(VII)}
\put(0.4,11.5){(VIII)}
\put(20,11.5){(IX)}
\end{picture}\]

In the above diagram, the regions (I) and (V) commute thanks to the diagram (6), the regions (II) and (IV) commute thanks to the naturality of $\Lh$, the regions (III) and (V) commute thanks to the diagram (1), the region (VI) commutes thanks to the naturality of $c$, the regions (VIII) and (IX) commute thanks to the diagram (B2). Therefore, the perimeter commutes, i.e., the diagram (3) commutes.

To prove that  the region (2) commutes, we consider the  following diagram:

\[\begin{sideways}
\scriptsize\setlength\unitlength{0.5cm}
\begin{picture}(30,17)
\put(0, 16){$(AB)(X\ts Y)$}
\put(0, 7){$(X\ts Y)(AB)$}
\put(0, 4){$X(AB)\ts Y(AB)$}
\put(0, 1){$(AB)X\ts (AB)Y$}

\put(6, 7){$((X\ts Y)A)B$}

\put(12, 16){$A(B(X\ts Y))$}
\put(12, 13){$A((X\ts Y)B)$}
\put(12, 10){$(A(X\ts Y))B$}
\put(12, 7){$(XA\ts YA)B$}
\put(11, 4){$(XA)B\ts (YA)B$}

\put(18, 7){$(AX\ts AY)B$}
\put(18, 4){$(AX)B\ts (AY)B$}

\put(24, 16){$A(BX\ts BY)$}
\put(24, 13){$A(XB\ts YB)$}
\put(24, 4){$A(XB)\ts A(YB)$}
\put(24, 1){$A(BX)\ts A(BY)$}

\put(1.7, 1.6){\vector(0,1){2}}\put(2, 2.5){$c\tx c$}
\put(1.7, 6.7){\vector(0,-1){2}}\put(0.5, 5.5){$\Rh$}
\put(1.7, 15.7){\vector(0,-1){8}}\put(1, 12){$c$}

\put(25.7, 15.7){\vector(0,-1){2}}\put(22, 14.5){$id\tx (c\tx c)$}
\put(25.7, 12.7){\vector(0,-1){8}}\put(25, 8.5){$\Lh$}
\put(25.7, 1.7){\vector(0,1){2}}\put(21, 2.5){$id\tx c\ts id\tx c$}

\put(13.7, 15.7){\vector(0,-1){2}}\put(11.72, 14.5){$id\tx c$}
\put(13.7, 12.7){\vector(0,-1){2}}\put(12.5, 11.5){$a$}

\put(13.7, 6.7){\vector(0,-1){2}}\put(12.5, 5.5){$\Rh$}
\put(19.7, 6.7){\vector(0,-1){2}}\put(20, 5.5){$\Rh$}

\put(11.7,16.1){\vector(-1,0){8}}\put(7.58,16.3){$a$}
\put(15.6,16.1){\vector(1,0){8}}\put(18.5,16.3){$id\tx \Lh$}

\put(15.6,13.1){\vector(1,0){8}}\put(18.5,13.3){$id\tx \Rh$}

\put(3.6,7.1){\vector(1,0){2}}\put(4.3,7.3){$a$}
\put(9.6,7.1){\vector(1,0){2}}\put(9.6,7.5){$\Rh\tx id$}
\put(15.6,7.1){\vector(1,0){2}}\put(15,6.2){$(c\ts c)\tx id$}

\put(4.35,4.1){\vector(1,0){6.35}}\put(6.3,4.3){$a\ts a$}
\put(15.6,4.1){\vector(1,0){2}}\put(14.5,3.2){$c\tx id \ts c\tx id$}
\put(23.7,4.1){\vector(-1,0){1.45}}\put(22.3,4.5){$a\ts a$}

\put(23.7,1.1){\vector(-1,0){19.5}}\put(12.5,1.3){$a\ts a$}

\put(11.5, 9.8){\vector(-3,-2){3}}\put(7.8,8.8){$c\tx id$}
\put(15.5, 9.8){\vector(3,-2){3}}\put(17.3,8.8){$\Lh\tx id$}

\put(-0.2,16.1){\line(-1,0){2}}
\put(-2.2,16.1){\line(0,-1){15}}\put(-2, 10){$\Lh$}
\put(-2.2,1.1){\vector(1,0){2}}

\put(27.5,16.1){\line(1,0){2.7}}
\put(30.2,16.1){\line(0,-1){15}}\put(29, 8.5){$\Lh$}
\put(30.2,1.1){\vector(-1,0){1.8}}

\put(6,10.5){(I)}
\put(19,14.5){(II)}
\put(19,10){(III)}
\put(13.2,8.2){(IV)}
\put(6.5,5.5){(V)}
\put(15.8,5){(VI)}
\put(13,2.2){(VII)}
\put(-1,9){(VIII)}
\put(27,10){(IX)}
\end{picture}
\end{sideways}\]
\vspace{1cm}

In the above diagram, the regions (I) and (VII) commute since $(\A, \tx)$ is a braided tensor category; the regions (II), (IV) and (VIII) commute thanks to the diagram (6); the region (III) commutes thanks to Proposision 3.2;  the region (VI) commutes thanks to the naturality of $\Lh$; the perimeter commutes  thanks to the diagram (1). Therefore, the diagram (VI) commutes, i.e., the diagram (2) commutes.
\end{proof}

\begin{pro}
In the braided Ann-category  $\A$, the commutativity of the diagram (5.2) can be deduced from the diagrams (5.1), (6) and the compatibility of $c$ with the unitivity constraint $(I, l, r)$.
\end{pro}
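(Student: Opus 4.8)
The plan is to specialise diagram (6) to the case where the first variable is the $\otimes$-unit $1$, and then to feed that into diagram (5.1) together with the compatibility of the braiding with the unit constraint; the statement (5.2) then falls out by cancelling an isomorphism, so no large diagram chase is required.

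First I would record the three input identities, all for the fixed pair $X,Y$. Putting $A=1$ in diagram (6) gives
\[(c_{1,X}\ts c_{1,Y})\circ\La^1_{X,Y}=\Ra^1_{X,Y}\circ c_{1,\,X\ts Y},\]
where $c_{1,\,X\ts Y}\colon 1(X\ts Y)\to(X\ts Y)1$ is the left edge of the square and $c_{1,X}\ts c_{1,Y}\colon 1X\ts 1Y\to X1\ts Y1$ is the right edge. Diagram (5.1) gives $(l_X\ts l_Y)\circ\La^1_{X,Y}=l_{X\ts Y}$. Finally, the compatibility of $c$ with the unit constraint $(I,l,r)$ in a braided tensor category yields, for every object $Z$, the relation $l_Z=r_Z\circ c_{1,Z}$; I would use it for $Z=X$, $Z=Y$ and $Z=X\ts Y$.

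Then the computation is purely formal. By bifunctoriality of $\ts$ we have $l_X\ts l_Y=(r_X\circ c_{1,X})\ts(r_Y\circ c_{1,Y})=(r_X\ts r_Y)\circ(c_{1,X}\ts c_{1,Y})$, so diagram (5.1) rewrites as
\[(r_X\ts r_Y)\circ(c_{1,X}\ts c_{1,Y})\circ\La^1_{X,Y}=l_{X\ts Y}=r_{X\ts Y}\circ c_{1,\,X\ts Y}.\]
Substituting the specialised form of diagram (6) into the left-hand side turns this into
\[(r_X\ts r_Y)\circ\Ra^1_{X,Y}\circ c_{1,\,X\ts Y}=r_{X\ts Y}\circ c_{1,\,X\ts Y}.\]
Since the braiding is a natural family of isomorphisms, $c_{1,\,X\ts Y}$ is invertible and may be cancelled on the right, leaving $(r_X\ts r_Y)\circ\Ra^1_{X,Y}=r_{X\ts Y}$, which is exactly the commutativity of diagram (5.2).

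I do not expect a genuine obstacle here; the only care needed is bookkeeping. One must get the orientation of $c$ in diagram (6) right after the substitution $A=1$ (the left edge is $c_{1,\,X\ts Y}$ and the right edge is $c_{1,X}\ts c_{1,Y}$, not the inverses), and one must use the form $l_Z=r_Z\circ c_{1,Z}$ of the unit compatibility rather than the equivalent $r_Z=l_Z\circ c_{Z,1}$, so that the braidings it introduces match those produced by the specialised diagram (6). Equivalently, the whole argument can be packaged as one commuting cylinder whose faces are diagram (5.1), the specialised diagram (6), the three instances of the unit compatibility, and the identity on $X\ts Y$; all of these commute, hence so does the remaining face, which is diagram (5.2) composed with the isomorphism $c_{1,\,X\ts Y}$.
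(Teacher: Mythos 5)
Your argument is correct and is essentially the paper's own proof: the paper pastes diagram (5.1), the two unit-compatibility triangles, and the $A=1$ instance of diagram (6) into a single figure and concludes that the remaining region, namely (5.2) composed with the isomorphism $c_{1,X\ts Y}$, commutes — exactly the ``commuting cylinder'' you describe in your last paragraph. Your equational rewriting, including the cancellation of $c_{1,X\ts Y}$ and the care about which form of the unit compatibility to use, is a faithful linearisation of that diagram chase.
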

\begin{proof} Consider the diagram:
\[\setlength\unitlength{0.5cm}%
\begin{picture}(13,7)
\put(0, 0){$1.(X\ts Y)$}
\put(10,0){$1.X\ts 1.Y$}

\put(5.7,3){$X\ts Y$}

\put(0, 6){$(X\ts Y).1$}
\put(10,6){$X.1\ts Y.1$}

\put(1.8, 5.8){\vector(0,-1){5.1}}
\put(11.5, 5.8){\vector(0,-1){5.1}}

\put(3.2, 0.2){\vector(1,0){6.6}}
\put(3.2, 6.2){\vector(1,0){6.6}}

\put(2, 5.8){\vector(2,-1){4}}
\put(11.3, 5.8){\vector(-2,-1){4}}

\put(2, 0.6){\vector(2,1){4}}
\put(11.3, 0.6){\vector(-2,1){4}}

\put(6.5, -0.6){$\La$}
\put(6.5, 6.4){$\Ra$}
\put(1.2, 3){$c$}
\put(12, 3){$c\ts c$}
\put(4.5, 1.2){$l$}
\put(7.5, 1.2){$l\ts l$}

\put(4.5, 4.7){$r$}
\put(7.5, 4.7){$r\ts r$}
\put(3,3){(I)}
\put(9,3){(IV)}
\put(6,1.2){(II)}
\put(5.5,4.7){(III)}
\end{picture}\]
\vspace{0.3cm}

In the above diagram, the regions (I) and (IV) commute thanks to the compatibility of $c$ with the unitivity constraint $(I,l,r)$; the region (II) commutes thanks to the diagram (5.1); the perimeter commutes thanks to the diagram (6). Therefore, the region (III) commutes, i.e., the diagram (5.2) commutes.  
\end{proof}

\noindent {\bf Remark 1.} According to Proposition 3.1 - 3.4, in the axiomatics of a braided Ann-category, we can omit the diagrams (2), (3), (5.2) and the compatibility of the functors $(R^A, \Ra^A)$ with the constraints  $a^+$, $ c^+$ of the operator $\oplus$.

  
\section{The object O}
In an Ann-category, the object O has important properties which is used to define the  $\Pi_{0}$-structure bimodule on $\Pi_1=Aut(0)$, where $\Pi_0$ is the ring of congruence classes of objects. 

\begin{pro}[{Proposition 1 [7]}]
In the Ann-category $\A$, there exist uniquely the isomorphisms:
$$\Lo^A:A\tx 0\ri A,\qquad \qquad \Ro^A:0\tx A\ri A$$
\noindent such that $(L^A, \La^A,\Lo^A),(R^A, \Ra^A, \Ro^A)$ are the functors which are compatible with the unitivity constraints of the operator $\oplus$ (also called $U$-functors).
\end{pro}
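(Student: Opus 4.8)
The plan is to recognise this as the standard fact that a $\ts$-functor between symmetric categorical groups is automatically, and essentially uniquely, a $U$-functor --- that is, it carries the unit object to one canonically isomorphic to the unit --- and then to read off $\Lo^A$ and $\Ro^A$ from it. By axiom (Ann-1) the pairs $(L^A,\La^A)$ and $(R^A,\Ra^A)$ are $\ts$-functors of the symmetric categorical group $(\A,\ts)$ into itself, so it suffices to treat $(L^A,\La^A)$, the case of $(R^A,\Ra^A)$ being its mirror image obtained by interchanging the two factors. Two features of $(\A,\ts)$ do all the work: first, $\A$ is a groupoid, so every morphism of $\A$ is already an isomorphism; second, for each object $Y$ the translation functor $-\ts Y\colon\A\ri\A$ is an equivalence of categories, a quasi-inverse being $-\ts Y^{*}$ for a chosen $\ts$-inverse $Y^{*}$ of $Y$, and hence $-\ts Y$ is fully faithful. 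Throughout I write $g_X\colon 0\ts X\ri X$ and $d_X\colon X\ts 0\ri X$ for the unit constraints of $\ts$, so that $g_0=d_0$.

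I would first settle uniqueness. If $\Lo^A\colon A\tx 0\ri 0$ makes $(L^A,\La^A,\Lo^A)$ compatible with $g$, then specialising the $U$-functor compatibility square to $X=0$ and rearranging shows that $\Lo^A\ts id_{A\tx 0}$ must equal the composite $g_{A\tx 0}^{-1}\circ(A\tx g_0)\circ(\La^A_{0,0})^{-1}$, a morphism $(A\tx 0)\ts(A\tx 0)\ri 0\ts(A\tx 0)$; since $-\ts(A\tx 0)$ is faithful, $\Lo^A$ is the unique morphism with this property, and being a morphism of the groupoid $\A$ it is automatically an isomorphism. For existence I would run the same computation backwards: that composite lies in the image of the full functor $-\ts(A\tx 0)$, so there is a morphism $\Lo^A\colon A\tx 0\ri 0$ realising it, and it then remains only to check that $(L^A,\La^A,\Lo^A)$ is compatible with $g$, and with $d$, at every object $X$. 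Compatibility with $d$ will follow immediately from compatibility with $g$, since $(L^A,\La^A)$ is compatible with the commutativity constraint $c^{+}$ by (Ann-1) (or by Proposition \ref{md1}) and $c^{+}$ interchanges the left and right unit constraints. The mirror argument applied to $(R^A,\Ra^A)$ produces the unique isomorphism $\Ro^A\colon 0\tx A\ri 0$ with the analogous compatibility.

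The step I expect to carry the real content is the verification, in the existence part, that the $g$-compatibility square --- which by construction of $\Lo^A$ holds at $X=0$ --- in fact commutes for every $X$. Concretely this is a diagram chase involving $\La^A$, the associativity and unit constraints of $\ts$, and a chosen $\ts$-inverse $(A\tx 0)^{*}$, kept under control by the naturality of $\La^A$ and of the constraints together with Mac Lane's coherence theorem for the symmetric categorical group $(\A,\ts)$; conceptually it is precisely the assertion that every $\ts$-functor between symmetric categorical groups admits a unique refinement to a $U$-functor, which one may equally well just cite. Everything else --- exhibiting the candidate morphism and pinning it down uniquely --- is formal, and is made so exactly by the invertibility of the objects and morphisms of $(\A,\ts)$ that is built into the notion of an Ann-category.
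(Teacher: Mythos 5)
There is nothing in the paper to compare your argument against: the statement is quoted as ``Proposition~1~[7]'' and the paper gives no proof, importing the result from the reference on the axiomatics of Ann-categories. Judged on its own, your plan is the standard argument for this fact and is sound. You correctly read the codomain of $\Lo^A$ as $0$ rather than $A$ (the statement as printed contains a typo; compare the use of $\Lo^A\colon A\tx O\ri O$ in Section~5). The uniqueness step is right: specialising the $g$-compatibility square to $X=0$ determines $\Lo^A\ts id_{A\tx 0}$, and faithfulness of $-\ts (A\tx 0)$ --- which holds because every object of the categorical group $(\A,\ts)$ is invertible --- pins down $\Lo^A$; fullness of the same functor then supplies the candidate for existence. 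Deriving $d$-compatibility from $g$-compatibility via $c^{+}$ is also fine, using Proposition~\ref{md1} or (Ann-1).

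The one place where your write-up stops short of a proof is the verification that the $g$-compatibility square, arranged by construction at $X=0$, holds for all $X$. You correctly identify this as the real content, but be explicit that it uses the compatibility of $(L^A,\La^A)$ with the associativity constraint $a^{+}$ from (Ann-1) in an essential way --- the bare $\ts$-functor data would not suffice --- together with faithfulness of $(A\tx 0)\ts -$: one tensors both sides of the desired identity on the left by $A\tx 0$, rewrites via $\La^A_{0,0\ts X}$ and the $a^{+}$-compatibility hexagon, and reduces to the $X=0$ instance plus unit coherence in $(\A,\ts)$. As stated, you cite this as ``the assertion that every $\ts$-functor between symmetric categorical groups admits a unique refinement to a $U$-functor,'' which is exactly the lemma being asked for here, so citing it is circular in spirit even if the lemma is classical; a complete proof should carry out the chase. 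With that caveat the proposal is correct and is almost certainly the same argument as the one in [7].
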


A part from some properties of the object 0 in an Ann-category (see [9]), we have the following proposition.

\begin{pro}
In a braided Ann-category, the following diagram 
\[\begin{diagram}
\node{X.0}\arrow[2]{e,t}{c}\arrow{se,l}{\Lo}
\node[2]{0.X}\arrow{sw,l}{\Ro}\\
\node[2]{0}
\tag{7}
\end{diagram}\]
commutes.
\end{pro}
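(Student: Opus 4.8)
The plan is to prove that diagram (7) commutes by reducing it to diagram (6) together with the uniqueness of the isomorphisms $\Lo^A, \Ro^A$ established in Proposition 4.1. The idea is to place diagram (7) inside a larger diagram obtained from (6) by specializing to the case $X = Y = 0$ (or, more usefully, by using the natural transformation $\La^A$ applied to suitable objects and then collapsing summands using the $U$-functor structure). Concretely, I would exploit the fact that $(L^A, \La^A, \Lo^A)$ and $(R^A, \Ra^A, \Ro^A)$ are $U$-functors: the isomorphism $\Lo^A : A \tx 0 \ri A$ is the unique morphism making the compatibility triangle with the unit constraints $g, d$ of $(\A, \ts)$ commute, and similarly for $\Ro^A$.

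First I would set up the instance of diagram (6) with $Y$ replaced by $0$ (keeping $X$ general), giving a square relating $A(X \ts 0) \to AX \ts A0$ on top, $c$ down the sides, and $(X \ts 0)A \to XA \ts 0A$ on the bottom, with $c \ts c$ on the right. Then I would paste onto the right-hand edge the compatibility triangles for $\La^A$ and $\Ra^A$ with the unit constraint on the $0$-summand — i.e. the triangles expressing that $\Lo^A$ collapses $A0$ to nothing and $\Ro^A$ collapses $0A$ to nothing inside the $U$-functor axioms — together with the naturality of $c$ with respect to the unit isomorphisms $r_X : X \ts 0 \to X$ and the canonical $0 \ts X \to X$. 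Chasing the outer boundary of this pasted diagram, the top and sides reduce (via $r$ and the analogous left unit for $\ts$, and the braiding's compatibility with these unit constraints, which holds since $(\A, \tx, a, c, (I,l,r))$ is a braided tensor category in the $\oplus$-monoidal-group sense) to the statement that $\Ro \circ c = \Lo$ as morphisms $X \tx 0 \to 0$.

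An alternative and perhaps cleaner route, which I would pursue in parallel, is the direct uniqueness argument: by Proposition 4.1 the morphism $\Lo^A : A \tx 0 \ri A$ is characterized uniquely by a compatibility property with the unit constraints of $\oplus$. So it suffices to show that the composite $\Ro \circ c : X \tx 0 \to 0 \to \,$(wait, here $A = X$, so $\Ro^X : 0 \tx X \to X$, and the composite $X \tx 0 \xrightarrow{c} 0 \tx X \xrightarrow{\Ro^X} X$) satisfies the same defining compatibility condition that $\Lo^X$ does; then uniqueness forces them equal. Verifying that condition amounts to checking one diagram built from $\La^X$, $\Ra^X$, $c$, $c \ts c$, $a^+$, and the unit constraints — which commutes by pasting diagram (6), the $U$-functor axioms for both $(R^X, \Ra^X, \Ro^X)$ and $(L^X, \La^X, \Lo^X)$, and the naturality of the braiding $c$ together with its compatibility with the associativity and unit constraints of $\oplus$ (noting $c_{0,0} = id$ is available as an axiom).

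The main obstacle I anticipate is bookkeeping: correctly identifying which instance of (6) and which $U$-functor compatibility triangles to paste, and in particular making sure the unit-constraint manipulations on the $\oplus$-side ($g$, $d$, $a^+$, $c^+$) are handled coherently — the symmetric categorical group structure on $(\A, \ts)$ guarantees all such "canonical" morphisms agree, but one must be careful that the specific triangles invoked are exactly the ones appearing in the definition of a $U$-functor. There is also a small subtlety in that $\Lo$ and $\Ro$ here are being applied with $A = X$, so the roles of "left" and "right" interact with the braiding in a way that must be tracked; I would resolve this by writing everything in terms of $\Lo^X, \Ro^X$ explicitly rather than suppressing the superscript. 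Once the correct diagram is drawn, every region commutes by a cited earlier result (naturality of $c$, diagram (6), Proposition 4.1, braided-tensor-category axioms for the unit), so the argument is a pure diagram chase with no genuinely new computation.
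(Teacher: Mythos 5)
Your approach is correct and is essentially the paper's own: the paper packages the same argument as a general remark that any $\oplus$-morphism $\alpha:F\ri G$ between $U$-functors with $\alpha_0$ invertible satisfies $\Go\circ\alpha_0=\Fo$, and applies it to $\alpha=c\colon L^X\ri R^X$, which is an $\oplus$-morphism precisely because of diagram (6). Your second route (verifying that $\Ro^X\circ c_{X,0}$ satisfies the defining unit-compatibility of $\Lo^X$ and invoking the uniqueness in Proposition 4.1) is exactly this computation unpacked, so the only caveat is cosmetic: the relevant compatibility is with the $\oplus$-unit constraints $g,d$ via plain naturality of $c$, not via the braided-tensor-category unit axioms for $(I,l,r)$.
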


\begin{proof}
Firstly, we can easily prove the following remark:

 {\it
Let $(F,\breve F), (G,\breve G): \C\ri \C'$ be  $\ts$-functors which are compatible with the unitivity constraints and   $\Fo: F(0)\rightarrow 0', \Go: G(0)\rightarrow 0'$ are isomorphisms, respectively. If   $\alpha: F \rightarrow G$ is an $\ts$-morphism such that $\alpha_0$ is an isomorphism, then the following diagram
\[\begin{diagram}
\node{F0}\arrow[2]{r,t}{\alpha_0}\arrow{se,b}{\hat F}\node[2]{G0}\arrow{sw,b}{\hat G}\\
\node[2]{0'}
\end{diagram}\]
\noindent commutes.}

We now apply the above remark to the  two functors
\begin{equation*}
F: A\tx -,\quad G: -\tx A.
\end{equation*}

The morphism $\alpha: F\rightarrow G$ is defined by
\begin{equation*}
\alpha_X=c_{A,X}: A\tx X\rightarrow X\tx A
\end{equation*}
  
\noindent is an $\oplus$-morphism thanks to the diagram (6). Since $\alpha_0=c_{A,0}$ is an isomorphism, arccording to  the above remark the diagram $(7)$ commutes.  
\end{proof}


\section{The relation between a symmetric Ann-category and a distributivity category due to  Miguel L. Laplaza}

Following, we will establish the relation between a symmetric Ann-category and a distributivity category due to [4], and deduce the coherence in a symmetric Ann-category [4].

A distributivity category $\A$ consists of:\\
i)\ \ \ Two bifunctors $\otimes, \oplus : \A\times \A\rightarrow \A$; \\
ii)\ \ Two fixed objects $O$ and $1$ of $\A$, called {\it null } and {\it unit objects};
\\
iii) Natural isomorphisms: 
\begin{eqnarray*}
a_{A,B,C}: A\otimes (B\otimes C)\rightarrow (A\otimes B)\otimes C, & \qquad c_{A, B}: A\otimes B\rightarrow B\otimes A\\
a^+_{A,B,C}: A\oplus (B\oplus C)\rightarrow (A\oplus B)\oplus C, &\qquad  c^+_{A,B}: A\oplus B\rightarrow B\oplus A\\
l_A: 1\otimes A\rightarrow A,& r_A: A\otimes 1\rightarrow A\\
g_A: O\oplus A\rightarrow A, & d_A: A\oplus O\rightarrow A\\
\Lo^A: A\otimes O \rightarrow O, & \Ro^A: O\otimes A \rightarrow O
\end{eqnarray*}
and natural isomorphisms:
\begin{eqnarray*}
\Lh_{A,B,C}: A\otimes(B\oplus C)\rightarrow (A\otimes B)\oplus (A\otimes C)\\
\Rh_{A,B,C}: (A\oplus B)\otimes C\rightarrow (A\otimes C)\oplus (B\otimes C)
\end{eqnarray*}
for all $A,B,C$ of $\A$.

These natural isomorphisms satisfy the  coherence conditions for $\{a,c,l,r\}$ and $\{a^+, c^+, g, d\}$; the functors $(L^A, \La^A), (R^A, \Ra^A)$ (similar as in the  concept of an Ann-category) are compatible with the associativity, commutativity constraints of the operator $\oplus$, and satisfy the commutative diagrams 
(1), (2), (3), (4), (5.1), (5.2), (6)  and 13 following diagrams:

\begin{equation}
\Lo^O=\Ro^O: O\otimes O\rightarrow O\tag{L1}
\end{equation}
\[\begin{diagram}
\node{O(A\oplus B)}\arrow{e,t}{\Lh}\arrow{s,l}{\Ro^{A\oplus B}}
\node{OA\oplus OB}\arrow{s,r}{\Ro^A\oplus \Ro^B}\\
\node{O}
\node{O\oplus O}\arrow{w,t}{g_{_O}}
\tag{L2}
\end{diagram}\]
\begin{equation}
d_{_O}(\Lo^A\oplus \Lo^B)\Rh_{A,B,O}=\Lo^{A\oplus B}: (A\oplus B)O\rightarrow O\tag{L3}
\end{equation}
\begin{equation}
r_1=g_{_{O}}: 1\otimes O\rightarrow O \tag{L4}
\end{equation}
\begin{equation}
l_1=d_{_O}: O\otimes 1\rightarrow O\tag{L5}
\end{equation}
\begin{equation}
\Lo^A=\Ro^A.c_{_{A,O}} : A\otimes O\rightarrow O\tag{L6}
\end{equation}
\[\begin{diagram}
\node{O(AB)}\arrow{e,t}{a_{O,A,B}}\arrow{s,l}{\Ro^{AB}}
\node{(OA)B}\arrow{s,r}{\Ro^A\otimes id_B}\\
\node{O}
\node{O\otimes B}\arrow{w,t}{\Ro^B}
\tag{L7}
\end{diagram}\]

\[\begin{diagram}
\node{A(OB)}\arrow[2]{e,t}{a_{A,O,B}}\arrow{s,l}{id_A\otimes \Ro^{B}}
\node[2]{(AO)B}\arrow{s,r}{\Lo^A\otimes id_B}\\
\node{AO}\arrow{e,t}{\Lo^A}
\node{O}
\node{OB}\arrow{w,t}{\Ro^B}
\tag{L8}
\end{diagram}\]

\begin{equation}
\Lo^{AB} \tx a_{A,O,B}=\Lo^A\tx (id_A\tx  \Lo^B): A(BO)\rightarrow O \tag{L9}
\end{equation}
\[\begin{diagram}
\node{A(O\oplus B)}\arrow{e,t}{\Lh_{A,O,B}}\arrow{s,l}{id_A\otimes g_{ B}}
\node{AO\oplus AB}\arrow{s,r}{\Lo^A\oplus id_{AB}}\\
\node{AB}
\node{O\oplus AB}\arrow{w,t}{g_{AB}}
\tag{L10}
\end{diagram}\]

\begin{equation}
g_{BA}(\Ro^A\oplus id_{BA})\Rh_{O,B,A}=g_{B}\otimes id_A: (O\oplus B)A\rightarrow BA\tag{L11}
\end{equation}
\begin{equation}
d_{AB}(id_{AB}\oplus \Lo^A)\Lh_{A,B,O}=id_A\otimes d^B: A(B\oplus O)\rightarrow AB\tag{L12}
\end{equation}
\begin{equation}
d_{AB}(id_{AB}\oplus \Ro^B)\Rh_{A,O,B}=d_A\otimes id_B: (A\oplus O)B\rightarrow AB\tag{L13}
\end{equation}

\begin{pro}
Each symmetric Ann-category is a distributivity category.
\end{pro}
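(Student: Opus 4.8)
The plan is to run through Laplaza's list of data and axioms for a distributivity category item by item, matching each one either with an axiom of the symmetric Ann-category $\A$ or with a consequence of the results established in Sections 3 and 4. The underlying data are immediate: the two bifunctors $\otimes,\oplus$, the null object $0$ and the unit object $1$, and the constraints $a,c,a^{+},c^{+},l,r,g,d,\Lh,\Rh$ are exactly those of a braided Ann-category, while the isomorphisms $\Lo^{A}:A\otimes 0\to 0$ and $\Ro^{A}:0\otimes A\to 0$ are furnished by Proposition 4.1, which in addition states that $(L^{A},\La^{A},\Lo^{A})$ and $(R^{A},\Ra^{A},\Ro^{A})$ are $U$-functors. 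The coherence for $\{a,c,l,r\}$ is Mac Lane's coherence theorem for the symmetric tensor category $(\A,\otimes,a,c,(1,l,r))$; the coherence for $\{a^{+},c^{+},g,d\}$ holds because $(\A,\oplus,a^{+},c^{+},(0,g,d))$ is a symmetric categorical group; and the compatibility of $(L^{A},\La^{A})$ and $(R^{A},\Ra^{A})$ with the constraints $a^{+}$ and $c^{+}$ is supplied by (Ann-1) (equivalently, by Propositions 3.1 and 3.2). Of the commutative diagrams in Laplaza's list, (1), (4), (5.1) and (6) are axioms of a braided Ann-category, and (2), (3), (5.2) were shown to follow from the other axioms in Propositions 3.3 and 3.4.

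Thus the genuine content of the proof is the verification of the thirteen diagrams (L1)--(L13). Two of them are already at hand: (L6) is precisely diagram (7) of Proposition 4.2, and (L1) follows from (L6) by setting $A=0$ and using the axiom $c_{0,0}=\mathrm{id}$. For the remaining diagrams (L2)--(L5) and (L7)--(L13), which involve only the zero object together with the monoidal, additive and distributivity data, the uniform tool is the ``zero-object'' argument used in the proof of Proposition 4.2: each of the two legs of such a diagram is a morphism between $\oplus$-functors --- or, in the cases that feature $g$ or $d$, between $U$-functors --- whose value at the relevant object is determined, so the two legs agree by the uniqueness part of Proposition 4.1. Concretely, (L7)--(L9) record the compatibility of $\Lo,\Ro$ with the associativity constraint $a$ and are read off from the $U$-functor structure of $0\otimes-$, $-\otimes 0$ and $A\otimes-$; (L2), (L3), (L10)--(L13) record the compatibility of $\Lo,\Ro$ with $\Lh,\Rh,g,d$ and follow from naturality of $\Lh,\Rh$ together with the unit-compatibility of those $U$-functors; and (L4), (L5) identify $l,r$ at the unit objects with $g,d$ and follow from Proposition 4.1 applied to $1\otimes-$ and $-\otimes 1$. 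Several of these identities are also recorded in [9].

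I expect the main obstacle to be organizational rather than conceptual: one must carefully reconcile Laplaza's indexing with ours (in particular his $\Rh_{A,B,C}$ versus our $\Rh_{X,Y,A}$), and, for each of (L2)--(L13), exhibit the explicit commutative square that identifies both legs as morphisms of $\oplus$- or $U$-functors so that Proposition 4.1 can be invoked. The most delicate cases are (L10)--(L13), where distributivity is combined with the unit object $0$ of $\oplus$: there the interaction of $\Lh$ and $\Rh$ with $g$ and $d$ when one summand equals $0$ has to be extracted from the $U$-functor axioms for $(L^{A},\La^{A},\Lo^{A})$ and $(R^{A},\Ra^{A},\Ro^{A})$ rather than from any single prescribed diagram. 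Finally, it is the symmetry of $c$ --- not merely the braiding --- that makes Laplaza's axiomatics applicable at all, since his list presupposes a symmetric $c$; symmetry enters, for example, through Mac Lane coherence for $\{a,c,l,r\}$ and through combining (L6) with $c_{0,0}=\mathrm{id}$ to obtain (L1).
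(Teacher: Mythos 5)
Your proposal is correct and follows essentially the same route as the paper: the only conditions that genuinely involve the braiding are (L6), which is exactly diagram (7) of Proposition 4.2, and (L1), obtained from it via $c_{0,0}=\mathrm{id}$, while (L2)--(L5) and (L7)--(L13) are zero-object properties of the underlying Ann-category. The paper simply cites the corresponding propositions of [9] for the latter group, whereas you sketch the $U$-functor uniqueness argument by which they are proved; this is a difference of presentation, not of method.
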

\begin{proof} 
Let $\A$ be a symmetric Ann-category. From the definitions of a symmetric Ann-category and a distributivity category, we just must prove that  $\A$ satisfies the  conditions  (L1)-(L13).

 \noindent According to  Proposition 3.4 [9], the diagrams (L2), (L3) commute.

\noindent According to  Proposition 3.5 [9],  we have the equations (L4), (L5).

\noindent According to  Proposition 4.2, the diagram (L6) commutes.

 \noindent According to Proposition 3.3 [9], the diagrams (L7), (L8), (L9) commute.

\noindent According to  Proposition 3.1 [9], the diagrams (L10), (L11), (L12), (L13) commute.

\noindent From  the condition $c_{0,0}=id$ and  Proposition 4.2, we obtain $\Lo^o=\Ro^o$, i.e., the condition $(L1)$ is satisfied. So, each symmetric Ann-category is a distributivity category.
\end{proof}

In the above proof, only proof of the condition (L6) is related to the groupoid  property  of the  ground category, so we have

\begin{hq} In the axiomatics of a distributivity category, the axioms  (L1)-(L5), (L7)-(L13) are dependent.
\end{hq}

\begin{hq}
Let $\A$ be a category. $\A$ is a symmetric Ann-category iff $\A$ satisfies the  two  following conditions:\\
(i) $\A$ is a distributivity category;\\
(ii) Objects of category $(\A, \oplus)$ are all invertible and the ground category of $(\A, \oplus)$ is a groupoid.
\end{hq}

\begin{proof}
The necessary condition can be deduced from the definition of a symmetric Ann-category and Proposition 5.1.

The sufficient condition is obvious.
\end{proof}

We remark that the commutation of the diagrams in a symmetric Ann-category  $\A$ is independent of the invertibility of objects and morphisms in the category  $(\A, \oplus)$. Therefore, from the coherence theorem in a distributivity category [4], we have:

\begin{hq}
(Coherence theorem) In a symmetric Ann-category, each morphism built from  $a^+, c^+, g, d, a, c, l, r, \Lh, \Rh$ is just depent on the source and the target.
\end{hq}

The coherence theorem for a braided Ann-category is still an open problem.

\section{The relation between a  symmetric Ann-category and a ring-like category}

 Frohlich and C. T. C Wall have presented the concept of a {\it ring-like category} as a 
generalization of the category of modules on a commutative ring $R$ (see [1]).

 According to [1], a {\it ring-like} category consists of:\\
i)\ Two monoidal structures, given by the two functor: $\ts, \tx$;\\
ii) The distributivity  isomorphisms:
\[\Lh: A\otimes(B\oplus C)\rightarrow A\otimes B\oplus A\otimes C\]
\noindent such that the constraints   $a^+, c^+, a, c, \Lh$ satisfy the coherence  conditions, and the pairs $(L^A, \La^A)$ are the compatible with associativity, commutativity constraints of the operator $\oplus$,  satisfying the diagram (1) and the following diagram (8):
\[\scriptsize\setlength\unitlength{0.5cm}
\begin{picture}(18,10)
\put(1,9){$(A\ts B).(X\ts Y)$}
\put(1,7){$(X\ts Y).(A\ts B)$}
\put(0,5){$(X\ts Y).A\ts (X\ts Y).B$}
\put(0,3){$A.(X\ts Y)\ts B.(X\ts Y)$}
\put(-0.1,1){$(AX\ts AY)\ts (BX\ts BY)$}

\put(12.3,9){$(A\ts B)X\ts (A\ts B)Y$}
\put(12.3,7){$X(A\ts B)\ts Y(A\ts B)$}
\put(12,5){$(XA\ts XB)\ts (YA\ts YB)$}
\put(12,3){$(AX\ts BX)\ts (AY\ts BY)$}

\put(3.1, 7.5){\vector(0,1){1.3}}
\put(3.1, 6.8){\vector(0,-1){1.3}}
\put(3.1, 4.8){\vector(0,-1){1.3}}
\put(3.1, 2.8){\vector(0,-1){1.3}}

\put(15.3, 8.8){\vector(0,-1){1.3}}
\put(15.3, 6.8){\vector(0,-1){1.3}}
\put(15.3, 3.5){\vector(0,1){1.3}}
\put(6.5, 1.3){\vector(3,1){5.3}}

\put(5.5, 9.1){\vector(1,0){6.6}}

\put(3.3,8){$c$}
\put(3.3,6){$\Lh$}
\put(3.3,4){$c\ts c$}
\put(3.3,2){$\Lh\ts \Lh$}

\put(15.5,8){$c\ts c$}
\put(15.5,6){$\Lh\ts \Lh$}
\put(15.5,4){$(c\ts c)\ts (c\ts c)$}

\put(8.5, 9.3){$\Lh$}
\put(8.4, 2.2){$v$}
\put(12.3, 1) {\text{Diagram (8)}}
\end{picture}\]

\begin{pro}
Each symmetric Ann-category is a ring-like category.
\end{pro}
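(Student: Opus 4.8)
\emph{Proof proposal.} The plan is to extract almost all of the requirements of a ring-like category at once from Proposition 5.1, and then to reduce the single remaining point to the coherence theorem. So first I would invoke Proposition 5.1: a symmetric Ann-category $\A$ is a distributivity category. Reading this off against the definition of a ring-like category recalled above, I obtain for free the two monoidal structures $(\A,\oplus)$ and $(\A,\otimes)$, the left distributivity isomorphism $\Lh$, and the required coherence of the constraints: $a^+, c^+$ are coherent because $(\A,\oplus)$ is a symmetric categorical group, $a, c$ are coherent because $(\A,\otimes,c)$ is a braided tensor category, and the coherence of $\Lh$ against these is part of $\A$ being a distributivity category. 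By axiom (Ann-1) the pairs $(L^A,\La^A)$ are $\oplus$-functors compatible with the associativity and commutativity constraints of $\oplus$, and the diagram (1) is part of axiom (Ann-2). Hence the only condition still to be checked is the commutativity of the diagram (8).

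For the diagram (8) I would argue by coherence. Every arrow appearing in (8) is a canonical morphism in the sense of Corollary 5.4: the braiding $c$ and the distributivity isomorphism $\Lh$ are among the generators, their $\oplus$-sums $c\oplus c$, $\Lh\oplus\Lh$ and $(c\oplus c)\oplus(c\oplus c)$ are again canonical, and $v$ is, by its very definition, built from $a^+, c^+, id$. The composite down the left column followed by $v$, and the morphism $\Lh$ followed by the composite down the right column, both run from $(A\oplus B)\otimes(X\oplus Y)$ to $(AX\oplus BX)\oplus(AY\oplus BY)$, and neither these objects nor any intermediate object involves the unit $0$ of $\oplus$. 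Therefore Corollary 5.4 (the coherence theorem for a symmetric Ann-category) applies and forces the two composites to agree; diagram (8) commutes, and $\A$ is a ring-like category.

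Along this route I do not expect a serious obstacle, since the substantive work is already carried by Proposition 5.1 and Corollary 5.4. If instead one wants a verification of (8) that does not invoke the full coherence theorem, the natural plan is to paste the diagram (4) --- which relates the two expansions of $(A\oplus B)\otimes(X\oplus Y)$ obtained from $\Lh$ and from $\Rh$ --- together with several instances of the diagram (6), used to replace each occurrence of $\Rh$ by $\Lh$ conjugated by the braiding, and with the naturality of $c$ and of the canonical isomorphism $v$ in the symmetric categorical group $(\A,\oplus)$. In that approach the hard part is purely the bookkeeping needed to match the various $c\oplus c$ factors on the two sides of the hexagon; this is handled by the symmetry identity $c'=c$, equivalently $c_{P,Q}^{-1}=c_{Q,P}$, together with the naturality of $c$, but it is the step at which one must be most careful.
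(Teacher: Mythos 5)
Your reduction to diagram (8) matches the paper's, but your main argument for (8) itself is genuinely different from the paper's. The paper proves (8) by an explicit pasting: it builds the auxiliary diagram (9), whose regions (I) and (III) are instances of the compatibility diagram (6) (rewriting each $c$-conjugate of $\Lh$ as an $\Rh$) and whose region (II) is exactly axiom (4), so the perimeter --- which is (8) --- commutes. You instead invoke Corollary 5.4. That is logically admissible: Corollary 5.4 sits upstream of Proposition 6.1 (it depends only on Proposition 5.1 and Laplaza's theorem, so there is no circularity), every edge of (8) is a canonical morphism in the required class (including the $\oplus$-sums $c\ts c$, $\Lh\ts\Lh$ and the map $v$), and the domain $(A\ts B)\tx(X\ts Y)$ with four distinct letters satisfies the regularity hypothesis that Laplaza's coherence theorem actually carries (the paper's statement of Corollary 5.4 suppresses it, but you are right to check that no degenerate object occurs). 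What the coherence route buys is brevity; what it costs is that it rests on the full strength of Laplaza's theorem and is confined to the \emph{symmetric} case. The paper's explicit pasting uses only (4) and (6) --- note that the symmetry $c'=c$ is in fact never needed, contrary to your remark about matching the $c\ts c$ factors, so the argument is valid in any braided Ann-category --- and, more importantly, diagram (9) is reused verbatim in Propositions 6.2, 6.3 and 6.4, where one runs it in the other direction to recover (4) from (8); your coherence argument could not be recycled there. Your fallback sketch is essentially the paper's proof.
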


\begin{proof}
According to the definition of a symmetric Ann-category and the definition of a ring-like category, we just  prove that  in a symmetric Ann-category, the diagram (8) commutes. Indeed, we consider the following diagram:  
\[\scriptsize\setlength\unitlength{0.5cm}
\begin{picture}(15,14)
\put(0,13){$(X\ts Y)(A\ts B)$}
\put(0,10){$(A\ts B)(X\ts Y)$}
\put(-0.8,7){$(A\ts B)X\ts (A\ts B)Y$}
\put(-0.8,4){$X(A\ts B)\ts Y(A\ts B)$}

\put(12,13){$(X\ts Y)A\ts (X\ts Y)B$}
\put(12,10){$A(X\ts Y)\ts B(X\ts Y)$}
\put(11.65,7){$(AX\ts AY)\ts (BX\ts BY)$}
\put(11.65,4){$(AX\ts BX)\ts (AY\ts BY)$}
\put(11.65,1){$(XA\ts XB)\ts (YA\ts YB)$}

\put(2.1, 12.8){\vector(0,-1){2.2}}
\put(2.1, 9.8){\vector(0,-1){2.2}}
\put(2.1, 6.8){\vector(0,-1){2.2}}

\put(14.95, 12.8){\vector(0,-1){2.2}}
\put(14.95, 9.8){\vector(0,-1){2.2}}
\put(14.95, 6.8){\vector(0,-1){2.2}}
\put(14.95, 3.8){\vector(0,-1){2.2}}

\put(4.5, 13.1){\vector(1,0){7}}
\put(4.5, 10.1){\vector(1,0){7}}
\put(4.9, 6.7){\vector(3,-1){6.6}}
\put(4.9, 3.7){\vector(3,-1){6.6}}

\put(1.5, 11.5){$c$}
\put(1.5, 8.5){$\La$}
\put(0.5, 5.5){$c\ts c$}

\put(15.2, 11.5){$c\ts c$}
\put(15.2, 8.5){$\La\ts \La$}
\put(15.2, 5.5){$v$}
\put(15.2, 2.5){$(c\ts c)\ts (c\ts c)$}

\put(7.5, 13.2){$\La$}
\put(7.5, 10.2){$\Ra$}
\put(7.5, 6){$\Ra\ts \Ra$}
\put(6, 2){$\La\ts \La$}

\put(6, 0){\text{Diagram (9)}}
\put(7.3,11.5){(I)}
\put(7.3,8.3){(II)}
\put(7.2,4){(III)}
\end{picture}\]

In the above diagram, the region (I) commutes thanks to the diagram (6), the region (II) commutes thanks to the diagram (4); according to the diagram (6), each component of the region (III) commutes, so does the region (III). Then, the perimeter commutes, i.e., the diagram (8) commutes.  
\end{proof}

\begin{pro}
Let $\A$ be a category. Then, $\A$ is a symmetric Ann-category iff it satisfies the following  conditions:\\
i)  $\A$ is a ring-like category;\\
ii) All objects of the category $(\A, \oplus)$ are invertible and the ground of the  $(\A,\oplus)$ is a groupoid. 
\end{pro}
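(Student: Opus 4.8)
This is the ring-like counterpart of Corollary 5.2, and the plan is to organise the proof the same way; the difference is that here the sufficiency is genuinely substantial, since a ring-like category carries far less explicit structure than a distributivity category (there is no right distributivity constraint, and among the distributivity coherences only $(1)$ and $(8)$ are postulated). The necessity is immediate: if $\A$ is a symmetric Ann-category then $(\A,\oplus)$ is a symmetric categorical group by Definition 1(ii), which gives (ii), and $\A$ is a ring-like category by Proposition 6.1, which gives (i).

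For the sufficiency, let $\A$ be a ring-like category whose additive part $(\A,\oplus)$ satisfies (ii). The first step is to supply the one piece of Ann-category data that is missing, the right distributivity constraint. Since diagram $(6)$ is to hold, there is only one admissible choice, namely the natural isomorphism
\[
\Ra^A_{X,Y}=(c_{A,X}\oplus c_{A,Y})\circ\La^A_{X,Y}\circ (c_{A,X\oplus Y})^{-1}\colon (X\oplus Y)A\longrightarrow XA\oplus YA ,
\]
which makes $(6)$ commute by construction. It then remains to verify the axioms of Definition 2. One batch is immediate from the ring-like structure together with hypothesis (ii): the datum of Definition 1(ii) (a symmetric categorical group) is (ii) together with the symmetry of $c^+$; Definition 1(iii) and the braided tensor category conditions on $(\A,\otimes,a,c,(I,l,r))$, that is $(B1)$, $(B2)$ and the compatibility of $c$ with $(I,l,r)$, together with the symmetry of $c$ (so that the resulting Ann-category will be symmetric); and the constraint $\La$ with diagram $(1)$ and the unit coherence $(5.1)$.

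A second batch is obtained by re-running, essentially verbatim, the arguments of Sections 3 and 4, all of which become applicable the moment diagram $(6)$ holds: compatibility of $(R^A,\Ra^A)$ with $a^+$ as in Proposition 3.2; compatibility with $c^+$ directly from the defining formula for $\Ra$, the naturality of $c$, and the compatibility of $\La^A$ with $c^+$ (the braided analogue of Proposition 3.1); diagrams $(2)$ and $(3)$ as in Proposition 3.3; diagram $(5.2)$ as in Proposition 3.4; and the existence of the isomorphisms $\Lo^A,\Ro^A$ with diagram $(7)$, whence $\Lo^A=\Ro^A c_{A,0}$ and, at $A=0$, the normalization $c_{0,0}=\mathrm{id}$, as in Proposition 4.1 and Proposition 4.2 --- this last item being exactly where hypothesis (ii), the invertibility of objects and the groupoid property of $(\A,\oplus)$, is genuinely used. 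At each citation one has to confirm that the proof invoked uses only ingredients available here (naturality, diagrams $(1)$, $(6)$, $(B1)$, $(B2)$, and the compatibilities of $(L^A,\La^A)$), which is the case.

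The only axiom not covered by a direct citation is diagram $(4)$ of (Ann-2), because a ring-like category postulates diagram $(8)$ in place of $(4)$. The plan is to obtain it by running the diagram from the proof of Proposition 6.1 (the one labelled ``Diagram (9)'') in reverse: its perimeter is diagram $(8)$, which holds; its subregions (I) and (III) commute by diagram $(6)$, which now holds; and since every morphism occurring there is an isomorphism, the remaining inner region (II), which is precisely diagram $(4)$, must commute. I expect this reversal --- together with the care needed to extract $c_{0,0}=\mathrm{id}$ from the ring-like data and to check that the cited proofs from Sections 3 and 4 do not rely on the groupoid hypothesis in a place where it is not available --- to be the main obstacle; once it is settled, $\A$ meets every axiom of a symmetric Ann-category.
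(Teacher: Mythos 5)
Your proposal is correct and follows essentially the same route as the paper: the paper disposes of necessity via Proposition 6.1 and delegates sufficiency to Proposition 6.3, whose proof is exactly your plan (define $\Rh$ by diagram $(6)$, invoke Propositions 3.2--3.4 to recover the omitted axioms, and extract diagram $(4)$ from diagram $(8)$ by reversing the diagram $(9)$ argument). You merely inline the content of Proposition 6.3 instead of citing it, and your extra attention to $c_{0,0}=\mathrm{id}$ is a point the paper's own proof leaves implicit.
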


\begin{proof}  
The necessary condition can be implied from the definition of a symmetric Ann-category and  Proposition 6.1.

The sufficient condition can be implied from the definition of a ring-like category and the  following Proposition 6.3. 
\end{proof} 

In the section 2, we have proved: In a symmetric Ann-category, the diagrams $(2), (3),(4)$ and $(5.2)$ can be omitted. Now, we prove that we can omit the right distributivity constraint.

\begin{pro}
 Let $\A$ be a category and two bi-fucntors $\oplus, \otimes: \A\times A\rightarrow \A$. Then, $\A$ is a symmetric Ann-category iff the following  conditions are satisfied:\\
(i)  There is an object $O\in \A$ and naturality isomorphisms  $a^+, c^+, g, d$ such that  $(\A, \oplus, a^+, c^+, (O, g, d))$ is a symmetric categorical group.\\
(ii)   There is an object $1\in \A$ together with naturality constraints  $a, c, l, r$ such that   $(\A, \otimes, a, c, (1, l, r))$ is a  symmetric monoidal category.\\
(iii)  There is a naturality isomorphism $\Lh: A\otimes (X\oplus Y)\rightarrow (A\otimes X)\oplus(A\otimes Y)$
 such that $(L^A=A\otimes -, \La^A_{X, Y}=\Lh_{A, X, Y})$ are $\oplus$-functors which are compatible with $a^+$ and $c^+$, and the diagrams (1), (8), (5.1) commute.
\end{pro}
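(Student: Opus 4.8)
The plan is to prove the nontrivial direction: that a category $\A$ satisfying (i), (ii), (iii) is a symmetric Ann-category. The strategy is to \emph{construct} the missing right distributivity constraint $\Rh$ from the given data $\Lh$ and $c$, guided by diagram (6), and then verify all the Ann-category axioms by bootstrapping from the earlier propositions in the paper. Concretely, I would define
\[
\Rh_{X,Y,A} = (c_{A,X}\ts c_{A,Y})\circ \La^A_{X,Y}\circ c_{X\ts Y,A}^{-1} : (X\ts Y)\otimes A \ri (X\otimes A)\ts (Y\otimes A),
\]
which is exactly the composite forced by requiring diagram (6) to commute; naturality of $\Rh$ follows from naturality of $c$ and $\Lh$. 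With this definition in hand, diagram (6) commutes \emph{by construction}, and condition $c_{0,0}=\mathrm{id}$ will be imposed or derived from the symmetry.

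The main work is then to check the Ann-category axioms (Ann-1)--(Ann-3), i.e. diagrams (1)--(4), (5.1), (5.2), together with the compatibility of $(R^A,\Ra^A)$ with $a^+$ and $c^+$. Here is where the paper's earlier results do the heavy lifting, essentially in reverse: Proposition 3.2 shows that compatibility of $(R^A,\Ra^A)$ with $a^+$ follows from compatibility of $(L^A,\La^A)$ with $a^+$ (which is hypothesis (iii)) plus diagram (6); Proposition 3.1 (= Proposition 2 of [7], which only needs the symmetric categorical group structure and does not presuppose the full Ann-category) gives compatibility of both $(L^A,\La^A)$ and $(R^A,\Ra^A)$ with $c^+$; Proposition 3.3 derives diagrams (2) and (3) from diagram (1), diagram (6), and Proposition 3.2; and Proposition 3.4 derives (5.2) from (5.1), (6), and compatibility of $c$ with $(1,l,r)$. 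Diagram (4) is obtained from diagram (8) by the argument in Proposition 6.1 run backwards — that is, the commuting diagram in the proof of Proposition 6.1 relates (4) and (8) via instances of (6), so knowing (8) and (6) yields (4). Thus every axiom of a symmetric Ann-category is accounted for: (1), (5.1) are hypotheses; (8) is a hypothesis and yields (4); (6) holds by construction; and (2), (3), (5.2) and the $\oplus$-functoriality compatibilities of $(R^A,\Ra^A)$ follow from Propositions 3.1--3.4.

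One delicate point I would want to nail down carefully is the \emph{consistency} of using these earlier propositions: Propositions 3.2--3.4 are stated for a braided Ann-category, so strictly speaking I must check that their proofs use only the data now available — the symmetric categorical group $(\A,\ts)$, the symmetric monoidal structure $(\A,\otimes,c)$, the constraint $\Lh$ with its compatibilities, diagram (1), and the freshly defined $\Rh$ with diagram (6) — and not, circularly, the very axioms (2), (3), (5.2) we are trying to establish. Inspection of those proofs shows they only invoke naturality of $c$, $\Lh$, diagrams (1), (6), (B1), (B2), and each other in a well-founded order, so no circularity arises. The only genuine obstacle, then, is verifying that diagram (8) together with (6) really forces diagram (4): this requires exhibiting the intermediate diagram (the one labelled "Diagram (9)" in the proof of Proposition 6.1) and checking that each of its regions commutes for reasons available under hypotheses (i)--(iii) alone. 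This is the step I expect to demand the most care, though it is a routine diagram chase once the regions are identified. Finally, for the converse direction, the necessity of (i)--(iii) is immediate from the definition of a symmetric Ann-category together with Remark 1, which already licenses omitting (2), (3), (5.2), and the $(R^A,\Ra^A)$-compatibilities, and Proposition 6.1, which gives diagram (8).
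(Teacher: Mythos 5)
Your proposal is correct and follows essentially the same route as the paper: define $\Rh$ so that diagram (6) commutes, invoke Propositions 3.2--3.4 (and Remark 1) to dispose of diagrams (2), (3), (5.2) and the compatibilities of $(R^A,\Ra^A)$, and then obtain diagram (4) from diagram (8) by running the diagram (9) argument of Proposition 6.1 in reverse, with regions (I) and (III) commuting by the very definition of $\Rh$. Your explicit formula for $\Rh$ and your check against circularity in reusing Propositions 3.2--3.4 are more careful than the paper's terse write-up, but they do not change the argument.
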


\begin{proof}
The necessary condition is obviously.\\
The sufficient condition: Let $\A$ be a category satisfying all conditions in the above proposition. The right distributivity constraint in $\A$ is defined by
\[\Rh: (X\ts Y)\tx A\rightarrow X\tx A\ts Y\tx A\]  
\noindent thanks to the commutative diagram (6). Then, according to Propositions  $3.2-3.4$, to prove that  $\A$ is a symmetric Ann-category, it remains to prove that the diagram (4) commutes.
 
\noindent We consider the diagram (9). In this diagram, the regions (I) and (III) commute thanks to the definition of the distributivity constraint $\Rh$, the perimeter commutes thanks to the diagram (8). Hence, the region (II) of the diagram (9) commutes, i.e., the diagram (4) commutes. The proposition has been poved.
\end{proof}
A. Frohlich and C. T. C Wall commented that the axiomatics of a distributivity category due to  M. L. Laplaza [4] is complicated (see [1]). So, they presented the definition of a  {\it ring-like category} in order to present a reduced version. However, in  [1], authors have not show the relation between these two definitions.

 As a corollary of the Propositions 5.3, 6.2, we have the  following result:
\begin{pro}
 The two concepts of: a distributivity category and a ring-like category are equivalent. 
\end{pro}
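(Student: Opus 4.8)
The plan is to deduce the equivalence of the two notions purely formally from the two characterization results already established, namely Corollary~5.3 (a category $\A$ is a symmetric Ann-category iff it is a distributivity category whose additive structure $(\A,\oplus)$ has all objects invertible and is a groupoid) and Proposition~6.2 (the same characterization with ``distributivity category'' replaced by ``ring-like category''). So the proof is a short chase through a bi-implication, not a coherence argument.

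First I would fix a category $\A$ equipped with bifunctors $\oplus,\otimes$ and suppose $\A$ carries the structure of a distributivity category. The goal is to produce on $\A$ the structure of a ring-like category, and conversely. The natural route is to pass through the intermediate notion of a symmetric Ann-category: by Proposition~5.3 a symmetric Ann-category is always a distributivity category, and by Proposition~6.1 a symmetric Ann-category is always a ring-like category; the two corollaries 5.3 and 6.2 say these implications become equivalences once one adds the groupoid/invertibility hypothesis $(\star)$ on $(\A,\oplus)$. So the first step is to observe that being a distributivity category \emph{already implies} $(\star)$ — this is forced because the axioms of a distributivity category include the coherence conditions for $\{a^+,c^+,g,d\}$ together with the natural isomorphisms $g_A\colon O\oplus A\to A$, $d_A\colon A\oplus O\to A$, $c^+_{A,B}\colon A\oplus B\to B\oplus A$, which make $(\A,\oplus)$ a symmetric monoidal category in which every structural arrow is invertible; but one must check that \emph{every} object of $(\A,\oplus)$ is invertible and that \emph{every} morphism is invertible, which is exactly the content of $(\star)$. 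If that holds, then Corollary~5.3 upgrades ``distributivity category'' to ``symmetric Ann-category'', and then Proposition~6.1 (or Proposition~6.2) converts it to a ring-like category; the reverse direction runs the same way through Proposition~6.2 and Proposition~5.1.

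The key steps, in order, are: (1) record that a distributivity category structure on $\A$ entails condition $(\star)$ and, symmetrically, that a ring-like category structure entails $(\star)$ — here one uses that in a ring-like category the two monoidal structures together with $a^+,c^+$ already make $(\A,\oplus)$ symmetric monoidal, but again the invertibility of all objects and morphisms is an extra hypothesis that must be argued, perhaps by noting that the relevant sources $[1,4]$ build these notions on a groupoid base or on $\oplus$-invertible objects; (2) apply Corollary~5.3 to turn ``distributivity category'' into ``symmetric Ann-category''; (3) apply Proposition~6.1 to turn ``symmetric Ann-category'' into ``ring-like category'', giving one implication; (4) for the converse, apply Proposition~6.2 to turn ``ring-like category'' into ``symmetric Ann-category'', then Proposition~5.1 to turn it back into ``distributivity category''. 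Chaining (2)--(3) and (4)--(5.1) yields the claimed equivalence.

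The main obstacle I anticipate is step (1): the bare definitions of ``distributivity category'' (Laplaza) and ``ring-like category'' (Fr\"ohlich--Wall) quoted in the excerpt do not literally state that \emph{every} object of $(\A,\oplus)$ is invertible or that the underlying category is a groupoid, yet Corollaries~5.3 and Proposition~6.2 both require exactly that hypothesis. So the honest argument must either (a) show these invertibility/groupoid properties are forced by the stated axioms — e.g. by exhibiting an inverse of each object from the distributivity data and showing every morphism is split by a zig-zag through the symmetric monoidal structure — or (b) take for granted, as the underlying references $[1]$ and $[4]$ evidently do, that the ground category in both formalisms is a groupoid and that objects are additively invertible, so that condition $(\star)$ is part of the ambient convention. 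Under convention (b) the proof is immediate: distributivity category $\Leftrightarrow$ symmetric Ann-category (Corollary~5.3) $\Leftrightarrow$ ring-like category (Proposition~6.2), and composing the two equivalences gives the result. I expect the paper intends reading (b), so the proof it gives will be the two-line composition ``by Corollary~5.3 and Proposition~6.2, both are equivalent to the notion of a symmetric Ann-category, hence to each other.''
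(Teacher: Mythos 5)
Your proposed reduction fails at exactly the point you flag as the ``main obstacle,'' and neither of your two escape routes works. Option (a) is false: the invertibility/groupoid condition $(\star)$ is \emph{not} forced by the axioms of a distributivity category or a ring-like category. The motivating example for both notions is the category of modules over a commutative ring $R$ with $\oplus$ the direct sum; this is not a groupoid and its objects are not $\oplus$-invertible. Option (b) is also not available: the paper's own Corollary~5.3 and Proposition~6.2 list $(\star)$ as a separate condition (ii) that must be \emph{added} to the distributivity (resp.\ ring-like) structure to recover a symmetric Ann-category; if $(\star)$ were part of the ambient convention in [1] and [4], that condition would be vacuous and the whole point of introducing Ann-categories (which do impose invertibility) as a distinct notion would collapse. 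So the chain ``distributivity category $\Rightarrow$ symmetric Ann-category $\Rightarrow$ ring-like category'' cannot be run through Corollary~5.3 and Proposition~6.2, because the first implication is simply not available without $(\star)$.

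The paper's actual proof works directly with the two axiom systems and never passes through symmetric Ann-categories. The key observation, stated just before Corollary~5.5, is that the diagram-chasing arguments of Propositions~3.1--3.4 and~6.1 nowhere use the invertibility of objects or morphisms of $(\A,\oplus)$, so they remain valid in the non-groupoid setting. One direction: a distributivity category satisfies diagrams (4) and (6), and the proof of Proposition~6.1 (diagram (9)) then yields diagram (8), so it is ring-like. Converse: in a ring-like category one \emph{defines} the right distributivity constraint $\Rh$ from $\Lh$ and $c$ via diagram (6); the arguments of Propositions~3.1--3.3 then give diagrams (2), (3) and the compatibility of $(R^A,\Ra^A)$ with $a^+$, $c^+$, and diagram (9) recovers (4) from (8). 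If you want to salvage your approach you would have to reprove the transfer results without the groupoid hypothesis, which is precisely what the paper's direct argument amounts to.
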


\begin{proof}

Let $\A$ be a distributivity category. From the definition of a distributivity category and a ring-like category, it remains to show that in a ring-like category, the diagram (8) commutes. 

Consider the diagram (9). Since, in a distributivity category, the diagrams (4) and (6) commute, so from the proof of  Proposition 6.1, we imply that the perimeter of the diagram (9) commutes, i.e., the diagram (8) commutes.

Inversely, let $\A$ be a ring-like category. In $\A$, we put
\[\Rh: (X\ts Y)A\ri XA\ts YA.\]
It is a naturality constraint which is defined by the commutative diagram $(6)$. Then, we prove similarly as Propositions 3.1, 3.2, 3.3, in a ring-like category, we deduced that the diagrams  $(2)$, $(3)$ commute and   we deduced that the functors  $(R^A, \Ra^A)$ are compatible with the associativity, commutativity of the operator of the $\oplus$.

Finally, it remains that the diagram  $(4)$ commutes. We consider the diagram (9). Since, in a ring-like category, the diagrams  $(6)$ and (8)  commute, according to the proof of Proposition 6.1, we obtain the region (II) of the diagram (9) commutes, i.e., the diagram (4) commutes. So, each ring-like category is a distributivity category.
\end{proof}


\section{The center of an almost strict Ann-category}

Andr\'{e} Joyal and Ross Street [2] have built a  braided tensor $L_{\mathcal V}$ of a strict tensor category $(\mathcal{V}, \otimes)$.  C. Kassel [3] has presented  one for an arbitrary tensor category.

According to  [3], the center $L_{\mathcal V}$ of the tensor category $\mathcal V$ is a category whose objects are pairs $(A,u)$, where $A\in Ob(\mathcal V)$ and
\[u_{X}: A\otimes X\longrightarrow X\otimes A\]
is a naturality transformation satisfying following commutative diagrams:
\[\scriptsize\begin{diagram}
\node{A\otimes I}\arrow[2]{e,t}{u_I}\arrow{se,b}{r}
\node[2]{I\otimes A}\arrow{sw,b}{l}\\
\node[2]{A} 
\end{diagram}\]
\[\scriptsize
\begin{diagram}
\node{(A\otimes X)\otimes Y}\arrow{e,t}{a^{-1}}\arrow{s,l}{u_X\otimes id_Y}
\node{A\otimes (X\otimes Y)}\arrow{e,t}{u_{X\tx Y}}
\node{(X\otimes Y)\otimes A}\arrow{s,r}{a^{-1}}\\
\node{(X\otimes A)\otimes Y}\arrow{e,t}{a^{-1}}
\node{X\otimes (A\otimes Y)}\arrow{e,t}{id_X\tx u_Y}
\node{X\otimes (Y\otimes A)}
\end{diagram}\]

A morphism $f: (A,u)\rightarrow (B,m)$ in $L_{\mathcal V}$ is a morphism $f: A\rightarrow B$ satisfies  the condition
\begin{equation}
m_X\circ(f\otimes id)=(id\otimes f)\circ u_Y
\tag{$10$}
\end{equation}
for all   $X\in {\mathcal V}$.

The tensor product of two objects in $L_{\mathcal V}$ is defined:
\[(A,u)\times (B,m)=(A\otimes B, u\times m)\]
\noindent in which  $u\times m$ is a morphism defined by the following commutative diagram:

\[\begin{diagram}
\node{(A\otimes B)\otimes X}\arrow{s,l}{(u\times m)_X}
\node{A\otimes (B\otimes X)}\arrow{e,t}{id\otimes m_X}\arrow{w,t}{a}
\node{A\otimes(X\otimes B)}\arrow{s,r}{a}\\
\node{X\otimes (A\otimes B)}\arrow{e,t}{a}
\node{(X\otimes A)\otimes B}
\node{(A\otimes X)\otimes B}\arrow{w,t}{u_X\otimes id}
\tag{11}
\end{diagram}\]
Then $L_{\mathcal V}$ is a braided tensor category with the braiding defined by
\[c_{_{(A,u), (B,m)}}=u_B: (A,u)\times (B,m)\rightarrow (B,m)\times (A,u).\]

Now, we will build  {\it the tencer} of an  Ann-category $\mathcal A$ and the main result of this section is Theorem 7.3. Firstly, let us  recall that each Ann-category is Ann- equivalent to the  reduced  one of the type $(R,M)$. Moreover, each  Ann-category  of the type  $(R,M)$ is equivalent to an   almost strict  Ann-category on it (see [6]), and in this category,  the family of morphisms  $i_X:X\ts X'\ri O$ is identity (an Ann-category is called {\it almost strict} if all its natural constraints, except for the commutativity constraint of the operation $\oplus$ and the left distributivity constraint, are identities). So, in this section, we always assume that  $\mathcal A$ is an almost strict  Ann-category and the morphisms $i_X:X\ts X'\ri O$ are identity.

\begin{dn}
The center of an Ann-category $\A$, denoted by  $C_{\mathcal A}$, is a category in which objects are pairs $(A,u)$, with $A\in Ob(\A)$ and
\begin{equation*}
u_{X}: A\otimes X \longrightarrow X\otimes A
\end{equation*} 
is a natural transformation satisfying the three conditions
\begin{eqnarray*}
(C1):\qquad \ \quad\ \  u_I&=&id\\
(C2):\qquad  \ u_{X\tx Y}&=&(id_X\tx u_Y)\circ (u_X\tx id_Y)\\
(C3):\qquad  \ u_{X\ts Y}&=&(u_X\ts u_Y)\circ \La_{A,X,Y}.
\end{eqnarray*}
\noindent The morphism $f: (A,u)\rightarrow (B,m)$ of $C_{\mathcal A}$ is a morphism $f: A\rightarrow B$ of $\mathcal A$ satisfying the condition $(10)$. 
\end{dn}

\begin{pro}
The center of an almost strict Ann-category $\A$ is a symmetric categorical group with the sum of two objects are defined by
\begin{equation*}
 (A,u)+(B,m)=(A\ts B, u+m)
\end{equation*}
in which: 
\begin{equation*}
(u+m)_X=\La_{X,A,B}^{-1}\circ (u_X\ts m_X)
\end{equation*}
and the sum of two morphisms of\  $C_{\A}$ is the  sum of morphisms in $\A$.
\end{pro}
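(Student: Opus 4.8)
The plan is to transport the symmetric categorical group structure of $(\A,\ts)$ to $C_{\A}$ along the forgetful functor $U: C_{\A}\ri\A$, $(A,u)\mapsto A$. Since $U$ is faithful and is the identity on morphism sets, once $+$ has been shown to be a well-defined functor on $C_{\A}$ that carries over the constraints $a^{+},c^{+},g,d$ of $\A$, all of their naturality, the pentagon, triangle and hexagon coherences and the symmetry $c^{+}c^{+}=\mathrm{id}$ will hold in $C_{\A}$ automatically, as will invertibility of morphisms. The substance is therefore: (a) $+$ is well defined on objects and on morphisms and is a functor; (b) the constraints of $\A$ lift to $C_{\A}$; (c) every object of $C_{\A}$ has a $+$-inverse. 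Throughout one uses that in the almost strict model all of $a,a^{+},l,r,g,d,\Ra,\Lo,\Ro$ and the morphisms $i_{X}: X\ts X'\ri O$ are identities, so that $\La$ is the only nontrivial distributivity isomorphism and the strict identifications $O\tx X=X\tx O=O$, $(A\ts B)\tx X=(A\tx X)\ts(B\tx X)$, $(A\tx X)'=A'\tx X$ and the like may be used freely.

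First I would check that $(A\ts B,u+m)$ is again an object of $C_{\A}$, i.e.\ that $u+m$ satisfies $(C1)$--$(C3)$. Condition $(C1)$ is immediate from diagram $(5.1)$, which, since $l=r=\mathrm{id}$, forces $\La_{I,A,B}=\mathrm{id}$. For $(C2)$, expand $(u+m)_{X\tx Y}=\La_{X\tx Y,A,B}^{-1}\circ(u_{X\tx Y}\ts m_{X\tx Y})$ using $(C2)$ for $u$ and for $m$ together with functoriality of $\ts$, and then push the $\La^{-1}$'s through by naturality of $\La$ and the mixed-associativity axioms $(1)$ and $(3)$; since $a=\Ra=\mathrm{id}$, these read $\La_{X\tx Y,A,B}=\La_{X,YA,YB}\circ(\mathrm{id}_{X}\tx\La_{Y,A,B})$ and $\La_{X,A,B}\tx\mathrm{id}_{Y}=\La_{X,AY,BY}$, and the outcome is exactly $(\mathrm{id}_{X}\tx(u+m)_{Y})\circ((u+m)_{X}\tx\mathrm{id}_{Y})$. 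For $(C3)$, expand $(u+m)_{X\ts Y}$ using $(C3)$ for $u$ and $m$; the identity then to be verified is $\La_{X\ts Y,A,B}^{-1}\circ((u_{X}\ts u_{Y})\ts(m_{X}\ts m_{Y}))\circ(\La_{A,X,Y}\ts\La_{B,X,Y})=((u+m)_{X}\ts(u+m)_{Y})\circ\La_{A\ts B,X,Y}$, which follows by applying diagram $(4)$ twice --- once as $\La_{A,X,Y}\ts\La_{B,X,Y}=\mathrm{v}\circ\La_{A\ts B,X,Y}$ and once, with the two pairs of objects interchanged, as $\La_{X\ts Y,A,B}=\mathrm{v}\circ(\La_{X,A,B}\ts\La_{Y,A,B})$ --- together with naturality of the canonical symmetry $\mathrm{v}$ of $(\A,\ts)$.

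Next, for morphisms: if $f: (A,u)\ri(B,m)$ and $f': (A',u')\ri(B',m')$ satisfy $(10)$, then $f\ts f': (A\ts A',u+u')\ri(B\ts B',m+m')$ satisfies $(10)$; this is a one-line chase from functoriality of $\ts$, condition $(10)$ for $f$ and $f'$, and naturality of $\La$ in its last two arguments. Functoriality of $+: C_{\A}\times C_{\A}\ri C_{\A}$ is then inherited from $\ts$. The unit object is $(O,u^{O})$ with $u^{O}_{X}=\mathrm{id}_{O}$, for which $(C1)$--$(C3)$ are trivial. To lift the constraints: $a^{+},g,d$ are identities, so their lifts amount to the identities $(u+m)+n=u+(m+n)$ and $u^{O}+u=u=u+u^{O}$ of natural transformations, which are precisely the compatibility of $(L^{X},\La^{X})$ with $a^{+}$ and with its unit constraints, i.e.\ (Ann-1) and Proposition 4.1; and $c^{+}_{A,B}$ is a morphism $(A,u)+(B,m)\ri(B,m)+(A,u)$ of $C_{\A}$ by a chase using naturality of $c^{+}$ and the compatibility of $(L^{X},\La^{X})$ and $(R^{X},\Ra^{X})$ with $c^{+}$ (Proposition 3.1). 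All coherence and naturality in $C_{\A}$ then hold because they hold in $(\A,\ts)$. Finally, the $+$-inverse of $(A,u)$ is $(A',u')$, where $A'$ is the $\ts$-inverse of $A$ in $\A$ (so $i_{A}=\mathrm{id}$ and $A\ts A'=O$) and $u'_{X}=(u_{X})'$ is the dual morphism; since $L^{X}$ and $R^{X}$ are $U$-functors and $\La$ is compatible with dualization (diagram $(4)$ applied with $B=A'$), one verifies $(C1)$--$(C3)$ for $u'$ and that $(A',u')$ is a two-sided inverse, while invertibility of morphisms is automatic because inverting equation $(10)$ again yields $(10)$.

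I expect the one genuinely laborious point to be the verification of $(C2)$ and $(C3)$ for $u+m$: several copies of $\La$ evaluated at different triples of objects have to be disentangled, and this is exactly what axioms $(1)$, $(3)$ and $(4)$ are designed to control; everything else is either formal (pulled back through the faithful functor $U$) or a short naturality argument. A secondary nuisance is the bookkeeping that justifies the strict identifications used above, which is where the standing hypotheses that $\Ra$, $\Lo$, $\Ro$ and the $i_{X}$ are identities get used.
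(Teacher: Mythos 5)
Your overall route coincides with the paper's: you verify (C1)--(C3) for $u+m$ using (5.1) for (C1), the strict forms of axioms (1) and (3) for (C2), and axiom (4) together with naturality of $v$ for (C3); you transport the coherence of $(\A,\oplus)$ through the faithful forgetful functor; and you construct inverses objectwise. All of that matches the paper's proof and is sound.

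The step that fails is your identification of the unit. You take the unit of $(C_{\A},+)$ to be $(O,u^{O})$ with $u^{O}_{X}=id_{O}$ and assert that $g,d$ lift to identities, citing the $U$-functor property of $(L^{X},\La^{X},\Lo^{X})$. But that property (with $g=id$) gives $\La_{X,O,A}=(\Lo^{X})^{-1}\oplus id$, hence
\[
(u^{O}+u)_{X}=\La_{X,O,A}^{-1}\circ\bigl(id_{O}\oplus u_{X}\bigr)=\Lo^{X}\oplus u_{X},
\]
which equals $u_{X}$ only if $\Lo^{X}=id$. In an almost strict Ann-category only the listed constraints $a^{+},g,d,a,l,r,\Rh$ are identities; $\Lo^{X}\colon X\otimes O\to O$ is \emph{derived} from the nontrivial constraint $\La$ and need not be the identity (in the type $(R,M)$ model it is given by $-\lambda(x,0,0)$, which the axioms do not force to vanish). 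The correct unit is $(O,\theta)$ with $\theta_{X}=(\Lo^{X})^{-1}$, which is exactly what the paper uses. The error propagates into your inverse: a "dual morphism" $u'_{X}=(u_{X})'$ computed with identity pairings yields $(A,u)+(A',u')=(O,id)$, which need not even be isomorphic to $(O,\theta)$ in $C_{\A}$; the paper instead \emph{defines} $u'$ by the requirement $(u+u')_{X}=\theta_{X}$ and then checks (C1)--(C3) for it. With these two corrections your argument closes and becomes the paper's proof.
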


\begin{proof}
Firstly, we prove that for two objects $(A,u), \ (B,m)$ of $C_{\A}$ then\linebreak $(A\oplus B, u+m)$ defined above is an object of $C_{\A}$. Indeed, since $u_1=id,$\linebreak  $ m_1=id, \La_{1,A,B}=id$, we have $(u+m)_1=id$. On the other hand, we have
\begin{eqnarray*}
&&(u+m)_{XY}\\
&=&\La_{XY,A,B}^{-1}\circ (u_{XY}\ts m_{XY})\\
&=&(id_X\tx \La_{Y,A,B})^{-1}\circ \La_{X,YA,YB}^{-1}\circ (id_X\tx u_Y\ts id_X\tx m_Y)\\
&& \circ (u_X\tx id_Y\ts m_X\tx id_Y) \\
&& \quad \text{(according to the definition of an Ann-category) }\\
&=&(id_X\tx \La_{Y,A,B})^{-1}\circ (id_X\tx(u_Y\ts m_Y))\circ \La_{X,AY,BY}^{-1}\circ (u_X\tx id_Y\ts m_X\tx id_Y)\\
&&\quad (\text{thanks to the naturality of}\ \La)\\
&=&(id_X\tx \La_{Y,A,B})^{-1}\circ (id_X\tx(u_Y\ts m_Y))\circ \La_{X,AY,BY}^{-1}\circ ((u_X\ts m_X)\tx id_Y)\\
&=&(id_X\tx (u+m)_Y)\circ \La_{X,AY,BY}^{-1}\circ ((u_X\ts m_X)\ts id_Y)\\
&&\quad (\text{thanks to the definition of  $u + m$})\\
&=&(id_X\tx (u+m)_Y)\circ (\La_{X,A,B}\tx id_Y)^{-1}\circ ((u_X\ts m_X)\tx id_Y)\\
&=&(id_X\tx (u+m)_Y)\circ ((u+m)_X\tx id_Y)  \\
&&\ \  (\text{thanks to the definition of  $u+m$})
\end{eqnarray*}
So, $u+m$ satisfies the condition (C2).\\
Next, we verify that the morphism $u+m$ satisfies the condition (C3). 
\begin{eqnarray*}
&&(u+m)_{X\ts Y}\\
&=&\La_{X\ts Y,A,B}^{-1}\circ (u_{X\ts Y}\ts m_{X\ts Y})\\
&=&\La_{X\ts Y,A,B}^{-1}\circ [(u_X\ts m_X)\ts (u_Y\ts m_Y)](\La_{A,X,Y}\circ \La_{B,X,Y}) \\
&& \quad \text{(since $u,m$ satisfy the condition (C3))}\\
&=&\La_{X\ts Y,A,B}^{-1}\circ v_{XA,YA,XB,YB}^{-1}\circ [(u_X\ts m_X)\ts (u_Y\ts m_Y)]\circ v_{AX, AY, BX, BY}\\
&&\circ [\La_{A,X,Y}\circ \La_{B,X,Y} ]\\
&& \quad (\text{thanks to the naturality of the isomorphism $v$})\\
&=&(\La_{X,A,B}\ts \La_{Y,A,B})^{-1}\circ ((u_X\ts m_X)\ts (u_Y\ts m_Y))\circ  \La_{A\ts B,X,Y}\\
&&\quad (\text{thanks to the definition of an Ann-category})\\
&=&((u+m)_X\ts (u+m)_Y)\circ \La_{A\ts B, X,Y}\\
&&\quad (\text{thanks to the definition of}\ u+m)
\end{eqnarray*}
So $u+m$ satisfies the condition (C3), i.e., $(A\oplus B, u+m)$ is an object of $C_{\A}$.

Now, we assume that  $f: (A,u)\ri (B,m), g: (A',u')\ri (B', m')$ are morphisms of  $C_{\A}$. From the definition of the sum of two objects, the sum of morphisms in  $C_{\A}$, and the naturality of  the isomorphism $\Lh$, and $\Rh=id$, we can verify that $f+g=f\oplus g$ is a morphism of the category  $C_{\A}$. Furthermore, we can verify  that
\begin{equation*}
id: ((A,u)+(B,m))+(C,w)\rightarrow (A,u)+((B,m)+(C,w))
\end{equation*}
\noindent is the associativity constraint of the category $C_{\A}$,  $((O, \theta_X=\Lo^{-1}_X),id,id)$  is the unitivity constraint of $C_{\A}$, and
\begin{equation*}
c^+_{A,B}: (A,u)+(B,m)\ri (B,m)+(A,u)
\end{equation*}
\noindent is the commutativity constraint of $C_{\A}$. Finally, we prove that each object of $C_{\A}$ is invertible.

Let  $(A,u)$ be an object of the category  $C_{\A}$. According to the Ann-category $\A$, there exists $A'\in Ob(\A)$ such that $A\ts A'=O$. We define a natural transformation \ $u'_X: A'X\ri XA'$\ as follows:
\begin{equation*}
u_X+u'_X=\La_{X,A,A'}\circ \theta_X
\end{equation*}
Then, we have
\begin{equation*}
(u+u')_X=\La_{X,A,A'}^{-1}\circ (u_X+u'_X)=\theta_X.
\end{equation*}
 
 We can easily verify that $u'$ satisfies the  conditions  (C1) and (C2). Now, we will verify that the morphism  $u'$ satisfies the condition (C3).
\begin{eqnarray*}
&& u_{X\ts Y}+u'_{X\ts Y}\\
&=&\La_{X\ts Y, A, A'}\circ \theta_{X\ts Y}\quad \hspace{3cm} (\text{thanks to the definition of }\ u')\\
&=&v_{XA,YA,XA',YA'}^{-1}\circ(\La_{X,A,A'}\ts \La_{Y,A,A'})\circ (\theta_{X\ts Y})\\
&& (\text{according to the definition of an Ann-category  })\\
&=&v_{XA,YA,XA',YA'}^{-1}\circ(\La_{X,A,A'}\ts \La_{Y,A,A'})\circ 
(\theta_{X}\ts \theta_{Y})\quad (\text{since}\ A\ts A'=O)\\
&=& v_{XA,YA,XA',YA'}^{-1}\circ(\La_{X,A,A'}\ts \La_{Y,A,A'})\circ ((u_X\ts u'_X)\ts (u_Y\ts u'_Y))\\
&&(\text{thanks to the  definition of} \ u' \ \text{and }\ u+u'=\theta)\\
&=&((u_X\ts u_Y)\ts (u'_X\ts u'_Y))\circ v_{AX,A'X, AY,A'Y}\\
&& (\text{thanks to the naturality of} \ v)\\
&=&((u_X\ts u_Y)\ts (u'_X\ts u'_Y))\circ (\La_{A,X,Y}\ts \La_{A',X,Y})\\
&&
(\text{thanks to the definition of an Ann-category}).
\end{eqnarray*}
Together with the equation  $u_{X\ts Y}=(u_X\ts u_Y)\circ\La_{A,X,Y}$, we have:
\begin{equation*}
u'_{X\ts Y}=(u'_X\ts u'_Y)\circ\La_{A',X,Y}
\end{equation*}
So $(A',u')$ is an object of the category  $C_{\A}$ and it is the invertible object of $(A,u)$ respect to the operator  +.
\end{proof}


\begin{pro}
The $C_{\A}$ is a braided tensor category where the tensor product of two objects is defined by
\begin{equation*}
(A,u)\times (B,m)=(A\otimes B, u\times m)
\end{equation*}
\noindent in which\ $u\times m$ is the morphism given by the diagram (11), and the tensor product of two morphisms in $C_{\A}$ is indeed the tensor product in $\A$.
\end{pro}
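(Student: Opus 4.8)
The plan is to verify, in order, the three layers of structure that make $C_{\A}$ a braided tensor category: (a) the tensor product $(A,u)\times(B,m)=(A\otimes B,u\times m)$ is well defined, i.e. $u\times m$ really satisfies the center conditions $(C1)$, $(C2)$, $(C3)$; (b) with this tensor product, $C_{\A}$ inherits a monoidal (in fact symmetric-monoidal-on-$\otimes$) structure from $\A$, with associativity, left and right unit constraints lifted from $a$, $l$, $r$ of $\A$; and (c) the braiding $c_{(A,u),(B,m)}=u_B$ is a natural isomorphism satisfying the hexagon axioms (B1), (B2). Since $\A$ is assumed almost strict, $a$, $l$, $r$ are identities, so in (b) the constraints on $C_{\A}$ are identities and the coherence pentagon/triangle are trivial; the only work in (b) is checking that the identity map is a morphism of $C_{\A}$ in the relevant cases, i.e. that it commutes with the $u$-transformations, which is immediate from $(C2)$.

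The substantive step is (a). For $(C1)$ one uses $u_I=m_I=\mathrm{id}$ together with the defining diagram (11) for $u\times m$ and almost-strictness; this is a short diagram chase. For $(C2)$, expanding $(u\times m)_{X\otimes Y}$ via (11) and using $(C2)$ for $u$ and for $m$ separately, plus naturality of the $u$'s and $m$'s and the pentagon/naturality for $a$ (trivial here), one rewrites it as $(\mathrm{id}_X\otimes(u\times m)_Y)\circ((u\times m)_X\otimes\mathrm{id}_Y)$ — this mirrors exactly the computation already carried out in the proof of Proposition 7.2 for $u+m$ and the condition (C2), so I would follow that template. The genuinely new identity is $(C3)$ for $u\times m$: one must show
\[
(u\times m)_{X\oplus Y}=\bigl((u\times m)_X\oplus(u\times m)_Y\bigr)\circ\La_{A\otimes B,X,Y}.
\]
Here one expands both $(u\times m)_{X\oplus Y}$ and the right-hand side through (11), invokes $(C3)$ for $u$ and $m$, and then reduces everything to a commuting diagram built from $\La$, $a$, $id$ and naturality — precisely the coherence expressed by diagrams (1) and (3) of the Ann-category axioms (which relate $\La$ with $a$ on both sides). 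I expect the verification of $(C3)$ to be the main obstacle, because it is the one place where the multiplicative associativity $a$, the distributivity $\La$, and the two center-transformations $u$, $m$ all interact simultaneously; the bookkeeping is where diagrams (1) and (3) (available since $\A$ is a braided, hence in particular an, Ann-category) must be deployed, together with the naturality of $\La$.

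For step (c): naturality of $c_{(A,u),(B,m)}=u_B$ in each variable follows from the naturality of $u$ and from the morphism condition (10) defining morphisms of $C_{\A}$, exactly as in the tensor-category center recalled from [3] above; invertibility of $u_B$ holds because $u$ is a natural isomorphism by hypothesis in Definition 7.1. The hexagons (B1) and (B2) for $C_{\A}$ unwind, using almost-strictness ($a=\mathrm{id}$), into the two conditions $(C2)$: indeed (B1) for the object $(A,u)$ against $(B,m)\times(C,w)$ becomes $u_{B\otimes C}=(\mathrm{id}_B\otimes u_C)\circ(u_B\otimes\mathrm{id}_C)$, which is $(C2)$, while (B2) becomes the analogous statement obtained from $(C2)$ together with naturality of the $u$'s. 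I would close by remarking that these computations are the Ann-category analogue of C. Kassel's verification for the center of a tensor category, the only new ingredient being the compatibility of $u\times m$ with $\La$, handled in step (a) via $(C3)$.
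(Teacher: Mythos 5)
Your proposal matches the paper's proof in its essential structure: the paper likewise treats $(C3)$ for $u\times m$ as the only genuinely new verification, establishing it by expanding through diagram (11), invoking $(C3)$ for $u$ and $m$, the Ann-category compatibility of $\La$ with $a$ (together with $\Ra=\mathrm{id}$), and the naturality of $\La$, while delegating $(C1)$, $(C2)$, the morphism condition, the identity constraints, and the braiding axioms to the Joyal--Street center construction. The only differences are that you propose to re-derive those delegated facts directly, and that your reduction of (B2) should come from the defining diagram (11) for $u\times m$ rather than from $(C2)$; neither changes the argument.
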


\begin{proof}
Let  $(A,u), (B,m)$ be objects  of  $C_{\A}$. According to  A. Joyal and R. Street [2], $u\times m$ satisfies the  two conditions $(C1)$ and $(C2)$. On the other hand, $u\times m$ satisfies the condition (C3), since:
\begin{equation*}
\begin{split}
&\quad \ \ (u\times m)_{X\ts Y}=(u_{X\ts Y}\tx id_B)\circ (id_A\tx m_{X\ts Y})\\
&=((u_X\ts u_Y)\tx id_B)\circ (\La_{A,X,Y}\tx id_B)\circ (id_A\tx m_{X\ts Y})\\
&\qquad (\text{since}\  u\ \text{ satisfies the condition (C3)} )\\
&= ((u_X\ts u_Y)\tx id_B)\circ (\La_{A,X,Y}\tx id_B)\circ (id_A\tx (m_X\ts m_Y))\circ(id_A\tx \La_{B,X,Y})\\
&\qquad (\text{since}\  m\ \text{satisfies the condition (C3)})\\
&=(u_X\tx id_B\ts u_Y\tx id_B)\circ (\La_{A,X,Y}\tx id_B)\circ (id_A\tx(m_X\ts m_Y))\circ (id_A\tx \La_{B,X,Y})\\
&\qquad (\text{thanks to the definition of the Ann-category $\A$ and}\ \Ra=id)\\
&= (u_X\tx id_B\ts u_Y\tx id_B)\circ\La_{A,XB,YB}\circ (id_A\tx(m_X\ts m_Y))\circ (id_A\tx \La_{B,X,Y}) \\
&\qquad (\text{according to the definition of an Ann-category})\\
&=((u_X\tx id_B)\ts (u_Y\ts id_B)) \circ(id_A\tx m_A\ts id_A\tx m_Y)\circ \La_{A,BX,BY}\\
&\ \ \ \  \circ(id_A\tx \La_{B,X,Y})\qquad \hspace{4.5cm}(\text{since}\  \La\ \text{ is natural})\\
&=((u\times m)_X\ts (u\times m)_Y)\circ \La_{A,BX,BY}\circ(id_A\tx \La_{B,X,Y})\\
&\qquad (\text{thanks to the definition of }\ \  u+m)\\
&= ((u\times m)_X\ts (u\ts m)_Y)\circ\La_{A\tx B,X,Y}
\end{split}
\end{equation*}
So $(A\tx B, u\times m)$ is an object of the category $C_{\A}$.

Assume that  $f: (A,u)\ri (B,m), g: (A',u')\ri (B',m')$ are two morphisms of $C_{\A}$. According to  [2], the morphism
\begin{equation*}
f\times g=f\otimes g: (A,u)\times (A',u')\ri (B,m)\times (B',m')
\end{equation*}
satisfies the condition  $(10)$, i.e.,  $f\times g$ is a morphism of  $C_{\A}$. According to  [2], $C_{\A}$ has an associativity constraint as follows:
\begin{equation*}
id: ((A,u)\times (B,m))\times (c,w)\ri (A,u)\times ((B,m)\times (C,w)).
\end{equation*}

We can easily varify that  $(I, id)$ is an object of  $C_{\A}$ and it together with the identity constraints $l=id, r=id$ is the unitivity constraint respect to  the operator  $\times$ of $C_{\A}$. Finally, according to [2], $C_{\A}$ is a braided tensor category with the  braiding $c$ given by: 
\begin{equation*}
c_{(A,u), (B,m)}=u_B: (A,u)\times (B,m)\ri (B,m)\times (A,u). 
\end{equation*}
\end{proof}
\begin{thm}
$C_{\A}$ is a braided Ann-category. 
\end{thm}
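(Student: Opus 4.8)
The plan is to assemble the braided Ann-category structure on $C_{\A}$ out of the pieces already constructed in Propositions 7.1 and 7.2, and then verify the one genuinely new axiom, namely the compatibility diagram (6) relating the braiding to the distributivity constraint. First I would record that Proposition 7.1 gives $(C_{\A}, \oplus, a^+, c^+, (O,g,d))$ as a symmetric categorical group and Proposition 7.2 gives $(C_{\A}, \otimes, a, c, (I,l,r))$ as a braided tensor category; the underlying category is the same, and the functors $\oplus, \otimes$ are the obvious ones induced from $\A$. The associativity constraint $a$ for $\otimes$ on $C_{\A}$ is the identity (inherited from the almost strict $\A$), and likewise $l, r$ are identities, so the monoidal $A$-category axioms (iii) of an Ann-category hold trivially. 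It then remains to produce the distributivity constraints and check (Ann-1)--(Ann-3) together with diagram (6).

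Next I would define the distributivity isomorphisms on $C_{\A}$. The left one is forced: $\Lh^{C}_{(A,u),(X,v),(Y,w)}$ should be the morphism $\Lh_{A,X,Y}$ of $\A$, and one must check it is a morphism in $C_{\A}$, i.e.\ that it is compatible with the $u$-data of the source object $(A, -)\otimes((X,v)\oplus(Y,w))$ and the target $((A,-)\otimes(X,v))\oplus((A,-)\otimes(Y,w))$ in the sense of condition (10); this is exactly the content of condition (C3) in Definition 7.1 applied to the relevant objects, combined with the naturality of $\Lh$ and the Ann-category coherence (diagrams (1) and (4)) in $\A$. Since in the almost strict $\A$ we have $\Rh = id$, the right distributivity constraint $\Rh^{C}$ on $C_{\A}$ is likewise inherited, and one checks similarly it lies in $C_{\A}$. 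Having these, (Ann-1) for $C_{\A}$ reduces to the statement that the $\oplus$-functor structure maps $\La^A, \Ra^A$ are compatible with $a^+$ and $c^+$; by Proposition \ref{md1} and Proposition \ref{md2} it suffices to check this for one side, and that follows because $a^+, c^+$ on $C_{\A}$ are inherited from $\A$ and the corresponding compatibility holds there. Diagrams (1), (2), (3), (4), (5.1), (5.2) for $C_{\A}$ then follow from the same diagrams in $\A$, because every arrow involved is an arrow of $\A$ and the diagrams commute in $\A$ --- and by Remark 1 we really only need (1), (4), (5.1).

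The crux is diagram (6): one must show that for objects $(A,u), (X,v), (Y,w)$ of $C_{\A}$,
\[
(\,c_{(X,v),(A,u)} \oplus c_{(Y,w),(A,u)}\,)\circ \Lh^{C}_{(A,u),(X,v),(Y,w)} = \Rh^{C}_{(X,v),(Y,w),(A,u)}\circ c_{(A,u),(X,v)\oplus(Y,w)}.
\]
Unwinding the definitions, $c_{(A,u),(Z,-)} = u_Z$, and $\Rh^{C} = id$, while $\Lh^{C} = \Lh_{A,X,Y}$; so this is precisely the identity
\[
(u_X \oplus u_Y)\circ \Lh_{A,X,Y} = u_{X\oplus Y},
\]
which is exactly condition (C3) of Definition 7.1 for the object $(A,u)$. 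The side condition $c_{O,O} = id$ translates into $\theta_O = \Lo_O^{-1}$ being compatible appropriately, or more simply: the braiding of the unit object $(O,\theta)$ of $(C_{\A},\oplus)$ with itself is $\theta_O$, and one checks $\theta_O = id$ from almost strictness (all constraints other than $c^+$ and $\Lh$ are identities, so $\Lo$ is the identity, hence $\theta_O = id$). Thus diagram (6) and the $c_{0,0}=id$ condition hold essentially by construction. I expect the main obstacle to be the bookkeeping in verifying that $\Lh^{C}$ (and the inherited $\Rh^{C}$, $a^+$, $c^+$) are genuinely morphisms of $C_{\A}$, i.e.\ satisfy (10): this is where one must combine (C2), (C3), naturality of $\Lh$, and the Ann-coherence of $\A$, and it is the one place where a nontrivial diagram chase is unavoidable; everything else is inheritance from $\A$ plus the reductions of Section 3.
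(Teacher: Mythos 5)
Your proposal is correct and follows essentially the same route as the paper: take the symmetric categorical group structure from Proposition 7.1 and the braided tensor structure from Proposition 7.2, equip $C_{\A}$ with the inherited constraints $\La_{(A,u),(B,m),(C,w)}=\La_{A,B,C}$ and $\Ra=id$, observe that diagram (6) unwinds (via $c_{(A,u),(B,m)}=u_B$ and $\Ra=id$) to exactly condition (C3), and check $c_{(O,\theta),(O,\theta)}=id$ from almost strictness. The paper's own proof is terser and leaves the morphism-of-$C_{\A}$ verifications to the reader, so your extra bookkeeping is a refinement rather than a departure.
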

\begin{proof}
According to Proposition 7.1, $(C_{\A}, +)$ is a symmetric categorical group. According to  Proposition 7.2, $(C_{\A}, \times)$ is a braided tensor category. Moreover, we can verify that $C_{\A}$ has a distributivity constraints:
\begin{eqnarray*}
\La_{(A,u), (B,m), (C,w)}=\La_{A,B,C}, \  
\Ra_{(A,u), (B,m), (C,w)}=id.
\end{eqnarray*}
Since $\A$ is an Ann-category,  so is $C_{\A}$. On the other hand, each object $(A,u)$ of $C_{\A}$, natural isomorphism   $u$ satisfies the condition $(C3)$,  so the constraints $\La, \Ra=id, c$ of the category $C_{\A}$ satisfy the diagram (6). Since $id_0=id$,\linebreak $c_{(0,id), (0,id)}=id$. So $C_{\A}$ is a braided Ann-category.
\end{proof}

\vspace{0.2cm}
 
When  $\mathcal A$ is an almost strict Ann-category of the type $(R,M)$, we can describe more detail the center of $\A$. 

Let $\mathcal I $ be an almost strict Ann-category of the type $(R,M)$. Then, the center  $C_{\mathcal I}$ of  $\mathcal I$ is a category whose objects are pairs $(a,u)$, with $a\in R$, 
\[u(x): ax\ri xa\]
\noindent is a natural transformation satisfying the conditions (C1), (C2), (C3).

Firstly,  since $u(x): ax\ri xa$ is a morphism of a category of the type   $(R,M)$, so we have
\[ax=xa\ \ \forall x\in R\]
So $a\in Z(R)$.

Next, since $u$ satisfies  the condition (C1),  we have $u(1)=0$. The morphism   $u$ satisfies the condition   (C2), we have: 
\[u(xy)=xu(y)+yu(x).\]

Finally, $u$ satisfies the condition (C3), i.e., 
\[\lambda(a,x,y)=u(x)-u(x+y)+u(y)\]
So, each object of $\mathcal I$ is a pair $(a,u)$, in which  $a\in Z(R)$ and
\[u: R\ri M\]
is a function satisfying the three above conditions.

Let $f: (a,u)\ri (b,m)$ be a morphism of the category $C_{\mathcal I}$. Since $f: a\ri b$ is a morphism of  $\mathcal I$, we have $a=b$. The morphism $f$ satisfies the condition $(10)$, i.e., 
\[m(x)+(a,f)\otimes (0,x)=(0,x)\otimes (a,f)+u(x)\Leftrightarrow m(x)+xf=xf+u(x)\]

So $u=m$, i.e., morphisms of  $C_{\mathcal I}$ are all edomorphisms.

\begin{center}

\end{center}
\end{document}